\newtheorem{theorem}{Theorem}[section]
\newtheorem{lemma}[theorem]{Lemma}
\theoremstyle{definition}
\newtheorem*{theorem*}{Theorem}
\newtheorem{definition}[theorem]{Definition}
\newtheorem{remark}[theorem]{Remark}
\newcommand{\R}{\mathbb{R}}
\newcommand{\F}{\mathbb{F}}
\newcommand{\N}{\mathbb{N}}
\newcommand{\PP}{\mathbb{P}}
\newcommand{\hr}{_\textnormal{hr}}
\newcommand{\st}{_\textnormal{st}}
\newcommand{\lr}{_\textnormal{lr}}
\newcommand{\E}[2][]{\mathop{\mathbb{E}}_{#1}\left[ #2 \right]}
\newcommand{\beq}{\begin{eqnarray*}}
\newcommand{\eeq}{\end{eqnarray*}}
\newcommand{\beqn}{\begin{eqnarray}}
\newcommand{\eeqn}{\end{eqnarray}}
\begin{document}
\pagenumbering{arabic}
\setcounter{page}{1}

\title{The restrictiveness of the hazard rate order and the moments of the maximal coordinate of a random vector uniformly distributed on the probability $n$-simplex}

\author{Sela Fried \thanks{The author is a postdoctoral fellow in the Department of Computer Science at the Ben-Gurion University of the Negev. Research Supported by the Israel Science Foundation (ISF) through grant No.
1456/18 and European Research Council Grant number: 949707.}}
\date{} 
\maketitle

\begin{abstract}
Continuing the work of $\cite{Fried2021OnTR}$ who defined the restrictiveness of stochastic orders and calculated the restrictiveness of the usual stochastic order and the likelihood ratio order, we calculate the restrictiveness of the hazard rate order. Inspired by the works of \cite{onn2011generating} and \cite{weissman2011testing}, we propose a possible application of the restrictiveness results in randomness testing. We then apply a dimension reduction technique, that proved useful in obtaining the restrictiveness results, and provide an alternative proof for Whitworth's formula. By integrating the formula, we derive the moments of the maximal coordinate of a random vector that is uniformly distributed on the probability $n$-simplex.
\end{abstract} 

\section{Introduction}

This work studies two aspects of random vectors that are uniformly distributed on the probability $n$-simplex $\Delta^n$. First, we continue the project of quantifying the restrictiveness of stochastic orders, initiated recently by \cite{Fried2021OnTR} who defined the restrictiveness of stochastic orders and calculated the restrictiveness of the usual stochastic order and the likelihood ratio order. Stochastic orders are partial orders on probability distributions and are not, in general, total orders, i.e., two arbitrary probability distributions are not necessarily comparable with respect to a certain stochastic order. In particular, (the volume of) the set of all probability distributions that are comparable to a fixed probability distribution $P$ is a function of $P$. One possibility to obtain a measure of restrictiveness of stochastic orders that is independent of a particular choice of $P$ would be to average over all $P$. This path was taken by  \cite{Fried2021OnTR} who defined the restrictiveness of a stochastic order as the probability that two randomly and uniformly chosen probability distributions are comparable with respect to the stochastic order. Here, we calculate the restrictiveness of the hazard rate order, a common and important stochastic order that finds applications in reliability theory and survival analysis. The reader is referred to  \cite{shaked2007stochastic} for a comprehensive study of stochastic orders and to \cite{boland1994applications} for a review of works on the hazard rate order.

The main insight that is at the heart of all the results on the restrictiveness of stochastic orders is that, conditioned on one of the coordinates, comparability under a certain stochastic order may be verified in one dimension less. We apply this dimension reduction technique and provide an alternative solution to an old problem that was originally formulated as follows:

\bigskip

\textbf{The line version} A line of length $1$ is divided into $n\geq 2$ segments by $n-1$ random points. What is the probability that the length of the longest segment is less than or equal to $b$ for $b>0$? 

\bigskip

This problem whose solution is referred to in \cite[Exercise 5 on p. 213]{devroye2006nonuniform} as Whitworth's formula due to its seemingly first appearance in \cite[667 on p. 196]{whitworth1897dcc}, has numerous applications in statistics. It was formulated and its solution rediscovered many times (\cite[p. 252]{darling1953class}, \cite{whitworth1897dcc}, \cite{fisher1929tests}, \cite{garwood1940application}. The latter gives more references. Recently, \cite{pinelis2019order} treated the problem in a much broader scale). By randomly and uniformly drawing $n-1$ independent points in $[0,1]$, sorting them and taking differences (cf. \cite[Algorithm 2.5.3]{rubinstein2016simulation} or \cite[Theorem 2.1]{devroye2006nonuniform}), it is clear that the above problem may be formulated as follows:

\bigskip

\textbf{The simplex version} What is the probability that the maximal coordinate of a uniformly and randomly chosen point from $\Delta^n$ is less than or equal to $b$ for $b>0$?

\bigskip

We address the problem in its simplex version and upon establishing Whitworth's formula, we derive an expression for the moments of the maximal coordinate of a random vector uniformly distributed on $\Delta^n$, which, to the best of our knowledge, have not been calculated beyond the first two. These formulas involve sums of harmonic numbers and of generalized harmonic numbers, providing a setting where these appear naturally (cf. \cite{choi2011some}).

\bigskip

We conclude the work by proposing a possible application of the restrictiveness results in randomness testing that was inspired by the works of \cite{onn2011generating} and \cite{weissman2011testing}. 

\bigskip

The work is structured as follows: In the next section we go over our main results. Then, after a short preliminary section, Section \ref{sec; hr} addresses the hazard rate order and Section \ref{sec; max} the maximal coordinate. In section \ref{sec; app} we show how the restrictiveness of stochastic orders might be applied in randomness testing. 

\section{Main results}

Let $n\in\N$ and let $\theta=(\theta_1,\ldots,\theta_n)\in\R^n$ such that $\theta_1 +\cdots+\theta_n=1$ and $\theta_1,\ldots,\theta_n\geq 0$. Thus, $\theta$ is an element of the probability $n$-simplex $\Delta^n$. We study random vectors $\Theta$ that are uniformly distributed on $\Delta^n$, written $\Theta\sim\mathcal{U}\left(\Delta^n\right)$. In order to make the meaning of $\Theta\sim\mathcal{U}\left(\Delta^n\right)$ precise, we give the joint cumulative distribution function of such vectors. It is easily derived from known results on intersections between hyperplanes and hypercubes, but we could not find it written explicitely:

\begin{lemma}
Let $n\in\N$ and  $(\theta_1,\ldots,\theta_n)\in\R^n$. Suppose $\Theta=(\Theta_1,\ldots,\Theta_n)\sim\mathcal{U}\left(\Delta^n\right)$. Then $$\mathbb{P}(\Theta_1\leq\theta_1,\ldots,\Theta_n\leq\theta_n)= \begin{cases}
    \sum_{K\subseteq[n]}(-1)^{|K|}\left(\max\left\{1-\sum_{k\in K}\theta_k,0\right\}\right)^{n-1}&\textnormal{if }\theta_1,\ldots,\theta_n>0\\0&\textnormal{otherwise}. \end{cases}$$ 
\end{lemma}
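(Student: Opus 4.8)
The plan is to compute the joint CDF directly as a volume ratio. Since $\Theta \sim \mathcal{U}(\Delta^n)$, we have
\[
\mathbb{P}(\Theta_1 \le \theta_1, \ldots, \Theta_n \le \theta_n) = \frac{\mathrm{vol}_{n-1}\bigl(\{x \in \Delta^n : x_i \le \theta_i \text{ for all } i\}\bigr)}{\mathrm{vol}_{n-1}(\Delta^n)},
\]
where $\mathrm{vol}_{n-1}$ denotes $(n-1)$-dimensional Lebesgue measure on the affine hyperplane $\sum_i x_i = 1$. The case where some $\theta_i \le 0$ is immediate: if $\theta_i < 0$ the region is empty, and if $\theta_i = 0$ the region lies in a lower-dimensional face and has measure zero, giving probability $0$; so I would dispose of this first and assume $\theta_1, \ldots, \theta_n > 0$ for the remainder.

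For the main case, the standard device is to parametrize $\Delta^n$ by dropping one coordinate, say project onto $(x_1, \ldots, x_{n-1})$, so that $\Delta^n$ maps bijectively onto the ``corner'' simplex $S = \{y \in \R_{\ge 0}^{n-1} : \sum_{j=1}^{n-1} y_j \le 1\}$, which has $\mathrm{vol}_{n-1}(S) = 1/(n-1)!$; the Jacobian factor is constant so it cancels in the ratio. Under this map the event becomes the intersection of $S$ with the half-spaces $y_j \le \theta_j$ for $j \in [n-1]$ together with the constraint $1 - \sum_{j=1}^{n-1} y_j \le \theta_n$, i.e. $\sum_{j=1}^{n-1} y_j \ge 1 - \theta_n$. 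I would then compute the volume of this region by inclusion--exclusion: the volume of a ``truncated corner simplex'' $\{y \in \R_{\ge 0}^{n-1} : \sum y_j \le t\}$ is $t^{n-1}/(n-1)!$ (or $0$ if $t < 0$), and a single upper bound $y_j \le \theta_j$ can be removed by the substitution $y_j \mapsto y_j + \theta_j$, which shifts the remaining budget. Carrying this out for every subset of the constraints $\{y_j \le \theta_j : j \in [n-1]\}$ and the constraint $\sum y_j \le 1 - (1-\theta_n) \cdot \mathbf{1}[\text{included}]$ — more precisely, handling the lower bound $\sum y_j \ge 1-\theta_n$ by writing $z_j = $ shifted variables — produces a sum over subsets $K \subseteq [n]$ with sign $(-1)^{|K|}$ and term $\bigl(\max\{1 - \sum_{k \in K}\theta_k, 0\}\bigr)^{n-1}$, after multiplying by $(n-1)!$ to cancel $\mathrm{vol}_{n-1}(\Delta^n)^{-1}$. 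The index $n$ appears in $K$ exactly to encode whether the lower-bound constraint from $\theta_n$ is active.

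The cleanest way to organize the inclusion--exclusion is probably to reduce to a purely one-sided problem. Replace each event $\{\Theta_j \le \theta_j\}$ by its complement and note that $\{\Theta_i \le \theta_i \ \forall i\}$ has the same indicator as excluding all the ``bad'' events $B_j = \{\Theta_j > \theta_j\}$; but this only cleanly telescopes if one first symmetrizes. Alternatively — and this is the route I would actually write up — observe that for $x \in \Delta^n$ one has $x_i \le \theta_i$ for all $i$ iff for every $K \subseteq [n]$, $\sum_{k \notin K} x_k \le \sum_{k \notin K}\theta_k$, which is vacuous unless... no; instead I would use the known formula for the volume of the intersection of the simplex with a box $\prod_i [0, \theta_i]$, namely $\frac{1}{(n-1)!}\sum_{K}(-1)^{|K|}(1 - \sum_{k\in K}\theta_k)_+^{n-1}$, whose proof is exactly the iterated-substitution inclusion--exclusion sketched above, treating the box faces $x_k \le \theta_k$ one at a time and using that $\sum x_k = 1$ forces the ``opposite'' truncation. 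Dividing by $\mathrm{vol}_{n-1}(\Delta^n) = \tfrac{1}{(n-1)!}\sqrt n$ and noting the $\sqrt n$ is absorbed into the $(n-1)$-dimensional volume on the slanted hyperplane (so it too cancels) gives the claimed formula.

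The main obstacle is bookkeeping: getting the signs and the role of the special index $n$ right in the inclusion--exclusion, and cleanly justifying that the affine-hyperplane volume normalization cancels (the $\sqrt n$ Jacobian) so that everything reduces to the clean combinatorial identity $\mathrm{vol}_{n-1}(\Delta^n \cap \prod_i[0,\theta_i]) \cdot (n-1)! = \sum_K (-1)^{|K|}(1-\sum_{k\in K}\theta_k)_+^{n-1}$. There is no analytic difficulty — it is a finite computation — but stating the induction on $n$ (or on the number of active box constraints) carefully, with the base case $n = 1$ or $n=2$, is where care is needed. I would present it as an induction peeling off one constraint $\Theta_1 \le \theta_1$ at a time via the shift $x_1 \mapsto x_1 + \theta_1$ (which turns the truncated region for $n$ coordinates and budget $1$ into one for $n-1$ coordinates and budget $1 - \theta_1$), so that the recursive structure directly generates the subset sum.
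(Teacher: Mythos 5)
Your proposal is correct, but it takes a genuinely different route from the paper. The paper does not redo the combinatorics at all: it cites Marichal and Mossinghoff's results on sections of hypercubes, writes $\mathrm{Vol}(\Delta^{n,u}\cap I^\theta)=\sqrt{n}\,\tfrac{\partial}{\partial u}\mathrm{Vol}(G^{\mathbf{1},u}\cap I^\theta)$, rescales the box to the unit cube, applies their half-space--cube volume formula, and then divides by $\mathrm{Vol}(\Delta^{n,u})$. You instead give a self-contained elementary derivation: project out one coordinate onto the corner simplex $S=\{y\ge 0:\sum_j y_j\le 1\}$ (the constant Jacobian cancels in the ratio, as you note), and compute the volume of $\{y\in S: y_j\le\theta_j,\ \sum_j y_j\ge 1-\theta_n\}$ by inclusion--exclusion with the shifts $y_j\mapsto y_j+\theta_j$; writing $V(t)=\tfrac{1}{(n-1)!}\sum_{K\subseteq[n-1]}(-1)^{|K|}\bigl(t-\sum_{k\in K}\theta_k\bigr)_+^{\,n-1}$ for the box-truncated corner simplex, the answer is $V(1)-V(1-\theta_n)$, and the two sums merge into a single sum over $K\subseteq[n]$ with the index $n$ recording whether the budget is $1$ or $1-\theta_n$ --- exactly the claimed formula after multiplying by $(n-1)!$. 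This checks out, including the degenerate cases $\theta_i\le 0$. Two remarks: first, your middle paragraph contains an abandoned detour; the complement/inclusion--exclusion route you dismissed actually works immediately, since $\PP(\Theta_k>\theta_k\ \forall k\in K)=\bigl(1-\sum_{k\in K}\theta_k\bigr)_+^{\,n-1}$ (the Devroye formula the paper records in a remark), and summing $(-1)^{|K|}$ times this over $K\subseteq[n]$ is the statement --- this would be the shortest self-contained proof, provided you prove that survival formula (which is the same shift argument in disguise). Second, when writing up, phrase the peeling step as an induction on the number of active box constraints (removing the constraint $y_1\le\theta_1$ gives $V_J(t)=V_{J\setminus\{1\}}(t)-V_{J\setminus\{1\}}(t-\theta_1)$ in the same ambient dimension), not as dropping a coordinate, and do not appeal to the box-slice formula as ``known,'' since it is essentially the statement being proved; your shift argument proves it, so present it as such. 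Compared with the paper, your route buys a proof with no external dependence, at the cost of the bookkeeping you identified; the paper's buys brevity by outsourcing the identity to the cited hypercube-slicing theorems.
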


Coming to the restrictiveness results, recall that every $\theta\in\Delta^n$ induces a probability distribution $\PP_\theta$ of a random variable $X$ that can assume at most $n$ real numbers $x_1<\cdots<x_n$ by defining $\PP_\theta(X = x_i) = \theta_i, \;1\leq i\leq n$. We identify $\PP_\theta$ with $\theta$. Let $\theta=(\theta_1,\ldots,\theta_n),\theta'=(\theta'_1,\ldots,\theta_n')\in\Delta^n$. Recall (e.g. \cite[1.B.10]{shaked2007stochastic}) that $\theta$ is said to be smaller than $\theta'$ in the hazard rate order, written $\theta\leq\hr\theta'$, if for every $1\leq i\leq j\leq n$ it holds: $$\Big(\sum_{k=i}^n \theta_k\Big)\Big(\sum_{k=j}^n \theta'_k\Big)\geq\Big(\sum_{k=j}^n \theta_k\Big)\Big(\sum_{k=i}^n \theta'_k\Big).$$

We prove

\begin{theorem} 
Let $n\in\mathbb{N}$ and suppose $\Theta,\Theta'\sim\mathcal{U}\left(\Delta^n\right)$ are independent. Then
\begin{equation}\label{eq; 987} 
\PP(\Theta\leq\hr \Theta')=\frac{1}{2^{n-1}}.
\end{equation}
\end{theorem}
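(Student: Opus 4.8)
The plan is to reduce the hazard rate comparability of two points of $\Delta^n$ to that of two points of $\Delta^{n-1}$ by conditioning on the first coordinate, and then to induct on $n$.

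First I would rewrite the order relation in a more convenient form. For $\theta\in\Delta^n$ put $\bar\theta_i=\sum_{k=i}^n\theta_k$, so that $\bar\theta_1=1$. A point drawn from $\mathcal{U}(\Delta^n)$ has strictly positive coordinates almost surely, so the boundary of the simplex is negligible and we may restrict attention to $\theta,\theta'$ whose tail sums are all positive. For such points, dividing the defining inequalities by $\bar\theta_i\bar\theta_j$ shows that $\theta\leq\hr\theta'$ holds iff the ratio sequence $R_i:=\bar\theta'_i/\bar\theta_i$ is non-decreasing in $i=1,\dots,n$, and since $R$ is a finite sequence it suffices to check consecutive indices. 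Note that $R_1=1$.

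Next I would set up the induction. For $n=1$ the simplex is a single point, the two distributions are trivially comparable, and the probability is $1=1/2^0$; the case $n=2$ is already instructive, since there $R_2=(1-\Theta'_1)/(1-\Theta_1)\geq 1\iff\Theta_1\geq\Theta'_1$, of probability $\tfrac12$. For the inductive step I condition on $\Theta_1$ and $\Theta'_1$ and write $(\Theta_2,\dots,\Theta_n)=(1-\Theta_1)\Xi$ and $(\Theta'_2,\dots,\Theta'_n)=(1-\Theta'_1)\Xi'$. By the neutrality property of the uniform (Dirichlet) distribution on the simplex, $\Xi$ and $\Xi'$ are independent, each uniform on $\Delta^{n-1}$, and jointly independent of the pair $(\Theta_1,\Theta'_1)$, which is i.i.d.\ with a continuous distribution on $[0,1]$. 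The key computation is that for $i\geq 2$ one has $R_i=\dfrac{1-\Theta'_1}{1-\Theta_1}\cdot\dfrac{\bar\Xi'_{i-1}}{\bar\Xi_{i-1}}$, so the common positive factor cancels in each consecutive comparison among $R_2,\dots,R_n$, making the chain $R_2\leq\cdots\leq R_n$ equivalent to $\Xi\leq\hr\Xi'$, while the single remaining inequality $R_1\leq R_2$ reads $\Theta_1\geq\Theta'_1$. Hence $\{\Theta\leq\hr\Theta'\}=\{\Theta_1\geq\Theta'_1\}\cap\{\Xi\leq\hr\Xi'\}$, an intersection of independent events, so $\PP(\Theta\leq\hr\Theta')=\PP(\Theta_1\geq\Theta'_1)\cdot\PP(\Xi\leq\hr\Xi')=\tfrac12\cdot\tfrac1{2^{n-2}}=\tfrac1{2^{n-1}}$ by the inductive hypothesis.

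The only delicate points are the measure‑zero argument that lets us pass to the ratio formulation and the neutrality statement that $(\Xi,\Xi')$ is uniform on $\Delta^{n-1}\times\Delta^{n-1}$ and independent of $(\Theta_1,\Theta'_1)$; both are standard facts about the Dirichlet distribution. I expect the bookkeeping in the algebraic step — tracking exactly which of the consecutive inequalities survives the cancellation of the common factor — to be the part most prone to slips, but it is not conceptually difficult.
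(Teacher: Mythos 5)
Your proof is correct, and it takes a genuinely different route from the paper. Both arguments start from the same dimension-reduction observation (the paper's Lemma \ref{lem 1} is exactly your statement that, after fixing the first coordinates, $\theta\leq\hr\theta'$ is equivalent to $\theta'_1\leq\theta_1$ together with hazard-rate comparability of the normalized tails; your reformulation via the non-decreasing ratio sequence $R_i=\bar\theta'_i/\bar\theta_i$ is a clean way to see it, and the a.s.\ positivity of the tail sums disposes of the boundary cases). Where you diverge is in how the probability is then extracted: the paper first computes, for a \emph{fixed} $\theta$, the conditional probability $\PP(\Theta\geq\hr\theta)$ in closed form as a product (Lemma \ref{lem 3}, proved by an induction that integrates against the density of the first coordinate), and then integrates that product over the simplex using a non-trivial change of variables to arrive at $1/2^{n-1}$. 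You instead invoke the neutrality property of the uniform (Dirichlet$(1,\dots,1)$) distribution -- $\Theta_1$ is independent of $\Xi=(\Theta_2,\dots,\Theta_n)/(1-\Theta_1)\sim\mathcal{U}(\Delta^{n-1})$ -- so the event $\{\Theta\leq\hr\Theta'\}$ splits, up to null sets, into the independent events $\{\Theta_1\geq\Theta'_1\}$ (probability $\tfrac12$ by exchangeability and continuity) and $\{\Xi\leq\hr\Xi'\}$ (probability $2^{-(n-2)}$ by induction), and the recursion closes with no explicit integration at all. The paper implicitly uses the same conditional-uniformity fact inside Lemma \ref{lem 3}, so your neutrality appeal is not a gap, merely a packaging of it as independence. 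The trade-off: your argument is shorter and avoids the substitution computation entirely, while the paper's detour through Lemma \ref{lem 3} yields the explicit formula for $\PP(\Theta\geq\hr\theta)$ as a function of $\theta$, which is of independent interest and parallels the treatment of the other stochastic orders in the earlier work it builds on.
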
 

It is interesting to compare (\ref{eq; 987}) with the analogue results of \cite[Theorem on p. 1]{Fried2021OnTR}: $$\PP(\Theta\leq\st \Theta')=\frac{1}{n}\;\; \textnormal{ and }\;\; \PP(\Theta\leq\lr \Theta')=\frac{1}{n!}$$ where $\leq\st$ and $\leq\lr$ denote the usual stochastic order and the likelihood ratio order, respectively. It follows that the likelihood ratio order is more restrictive than the hazard rate order, which, in turn, is more restrictive than the usual stochastic order. This is in agreement with the well known relationship between these stochastic orders (e.g. \cite[Theorem 1.C.1. and Theorem 1.B.1.]{shaked2007stochastic}): Let $\theta, \theta'\in\Delta^{n}$. Then $$\theta\leq\lr \theta'\Longrightarrow \theta\leq\hr \theta'\Longrightarrow  \theta\leq\st \theta'.$$ 

We now come to the part regarding the maximal coordinate $\max(\Theta) = \max_{1\leq i\leq n}\{\Theta_i\}$ of $\Theta=(\Theta_1,\ldots,\Theta_n)\sim\mathcal{U}\left(\Delta^n\right)$. By applying a dimension reduction technique we obtain an alternative proof for Whitworth's formula:

\begin{theorem}\label{thm; 1313}
Let $2\leq n\in\N$ and $\frac{1}{n}< b\leq 1$. Suppose $\Theta=(\Theta_1,\ldots,\Theta_n)\sim\mathcal{U}\left(\Delta^n\right)$.  Then \begin{equation}\label{eq; 555}
\PP\left(\max(\Theta)\leq b\right) = \sum_{k=0}^{m}\binom{n}{k}(-1)^k((n-k)b-1)^{n-1}
\end{equation} where $0\leq m\leq n-2$ is such that $\frac{1}{n-m}<b\leq \frac{1}{n-m-1}$.
\end{theorem}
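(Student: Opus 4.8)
The plan is to pass to the complementary event and compute $\PP(\max(\Theta)>b)$ by inclusion--exclusion over the events $A_i\eqdef\set{\Theta_i>b}$, $i\in[n]$, and then re-index the resulting sum so that it takes the shape of \eqref{eq; 555}. The one ingredient I would isolate first is the elementary slice volume: for $\Theta\sim\mathcal{U}(\Delta^n)$ and reals $a_1,\dots,a_n\ge0$,
$$\PP(\Theta_1\ge a_1,\dots,\Theta_n\ge a_n)=\paren{\max\set{1-\textstyle\sum_{i=1}^n a_i,\ 0}}^{n-1}.$$
Indeed, when $\sum_i a_i<1$ the set $\set{\theta\in\Delta^n:\theta_i\ge a_i\text{ for all }i}$ is the image of $\Delta^n$ under the affine bijection $\phi\mapsto a+(1-\sum_i a_i)\phi$ of the hyperplane $\set{\sum\theta_i=1}$ to itself, and such a map scales $(n-1)$-dimensional volume by $(1-\sum_i a_i)^{n-1}$; when $\sum_i a_i\ge1$ the set is empty or a single point, a null set for $n\ge2$. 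Applying this with each $a_i\in\set{0,b}$ and exactly $k$ of them equal to $b$ gives $\PP\paren{\bigcap_{i\in K}\set{\Theta_i\ge b}}=\max\set{1-kb,0}^{n-1}$ when $|K|=k$, and the probability of the corresponding open event $\bigcap_{i\in K}A_i$ is the same since $\Theta$ has a density.

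With this in hand, inclusion--exclusion gives $\PP(\max(\Theta)>b)=\sum_{k=1}^{n}(-1)^{k+1}\binom{n}{k}\max\set{1-kb,0}^{n-1}$, hence, absorbing the $k=0$ term,
$$\PP(\max(\Theta)\le b)=\sum_{k=0}^{n}(-1)^{k}\binom{n}{k}\max\set{1-kb,0}^{n-1}=\sum_{k=0}^{L}(-1)^{k}\binom{n}{k}(1-kb)^{n-1},$$
where $L\eqdef\floor{1/b}$, since $\max\set{1-kb,0}=0$ for $k>L$ and the $k=L$ term also vanishes when $1/b$ happens to be an integer. The hypothesis $b>1/n$ forces $L\le n-1$, and $\frac1{n-m}<b\le\frac1{n-m-1}$ is exactly the statement $n-m-1\le 1/b<n-m$, i.e.\ $L=n-m-1$.

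It remains to convert the ``$1-kb$'' form to the ``$(n-k)b-1$'' form of \eqref{eq; 555}. Because $k\mapsto(kb-1)^{n-1}$ is a polynomial in $k$ of degree $n-1<n$, its $n$-th finite difference vanishes: $\sum_{k=0}^{n}(-1)^{k}\binom{n}{k}(kb-1)^{n-1}=0$. Writing $(1-kb)^{n-1}=(-1)^{n-1}(kb-1)^{n-1}$ and using this identity to replace $\sum_{k=0}^{L}$ by $-\sum_{k=L+1}^{n}$, then substituting $j=n-k$ in the latter sum, one obtains $\sum_{k=0}^{L}(-1)^{k}\binom{n}{k}(1-kb)^{n-1}=\sum_{j=0}^{n-L-1}(-1)^{j}\binom{n}{j}\big((n-j)b-1\big)^{n-1}$, and $n-L-1=m$; this is \eqref{eq; 555}.

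I expect the only delicate part to be the index bookkeeping: pinning down $L=\floor{1/b}$ in terms of $m$ through the chain $\frac1{n-m}<b\le\frac1{n-m-1}$, and separately checking the endpoints $b=\frac1{n-m-1}$ and $b=1$ (the latter being the case $m=n-2$, where the formula should collapse to $1$); it is also reassuring to note that $(n-j)b-1\ge(n-m)b-1>0$ for $0\le j\le m$, so the bases in \eqref{eq; 555} are all positive. An approach more in keeping with the dimension-reduction theme of the paper would instead condition on $\Theta_n$: since, conditionally on $\set{\Theta_n=t}$, one has $(\Theta_1,\dots,\Theta_{n-1})\overset{d}{=}(1-t)\,\Theta'$ with $\Theta'\sim\mathcal{U}(\Delta^{n-1})$ while $\Theta_n$ has (unconditional) density $(n-1)(1-t)^{n-2}$ on $[0,1]$, one gets the recursion $F_n(b)=(n-1)\int_0^{b}F_{n-1}\!\big(\tfrac{b}{1-t}\big)(1-t)^{n-2}\,dt$ for $F_n(b)\eqdef\PP(\max(\Theta)\le b)$ on $\Delta^n$, and then proves \eqref{eq; 555} by induction on $n$, splitting the integral at $t=1-b$ and using $F_{n-1}\equiv1$ on $[1,\infty)$; carrying out that induction step cleanly would be the main obstacle on that route.
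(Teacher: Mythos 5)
Your proposal is correct, but it proves the theorem by a genuinely different route than the paper. You compute the complementary probability by inclusion--exclusion over the events $\{\Theta_i>b\}$, using the joint tail formula $\PP(\Theta_1\ge a_1,\ldots,\Theta_n\ge a_n)=\left(\max\{1-\sum_i a_i,0\}\right)^{n-1}$ (which the paper itself records, citing Devroye, in the remark following Lemma 3.3, but deliberately does not use), and then convert the resulting truncated sum $\sum_{k=0}^{L}(-1)^k\binom{n}{k}(1-kb)^{n-1}$ with $L=\lfloor 1/b\rfloor=n-m-1$ into the form of (\ref{eq; 555}) via the vanishing $n$-th finite difference of the degree-$(n-1)$ polynomial $k\mapsto(kb-1)^{n-1}$ and the reindexing $j=n-k$; all of these steps check out, including the identification of $L$ with $n-m-1$ and the edge cases $b=1/(n-m-1)$ and $b=1$. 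The paper instead proves the statement (in the scaled form on $\Delta^{n,u}$, Lemma 5.1) by the dimension-reduction induction you only sketch in your closing paragraph: condition on $\theta_1$, use the induction hypothesis for $\Theta^{n,u-\theta_1}$, split into the cases $m=0$, $1\le m\le n-2$, $m=n-1$, and combine binomial coefficients by Pascal's rule. What each buys: your argument is shorter, more elementary, and makes transparent the equivalence between the classical ``$(1-kb)$'' form of Whitworth's formula and the ``$((n-k)b-1)$'' form used here; the paper's inductive proof is the very point of the theorem (an \emph{alternative} proof illustrating the dimension-reduction technique that drives the hazard-rate computation), and its general-$u$ statement is what gets integrated in Theorem 5.7 to produce the moments. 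So if your goal were to reproduce the paper's contribution rather than just the result, you would need to carry out the induction step you deferred, which is exactly where the paper's case analysis and index bookkeeping live.
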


Before we present our result regarding the moments of $\max(\Theta)$, let us review what has already been done in this respect: It was shown by \cite{onn2011generating} in two different ways (geometric and probabilistic) that \begin{equation}\label{eq; ex} \E{\max(\Theta)}=\frac{1}{n}\sum_{k=1}^n\frac{1}{k}.\end{equation} Their probabilistic proof relies on the fact that if $Y_1,\ldots,Y_n$ are iid exponential random variables and $Z=\sum_{i=1}^n Y_i$ then $\left(\frac{Y_1}{Z},\ldots,\frac{Y_n}{Z}\right)\sim\mathcal{U}(\Delta^n)$ (cf. \cite[Theorem 2.2]{devroye2006nonuniform}). They also remarked (\cite[Remark 3]{onn2011generating}) that their arguments allow to compute other moments and demonstrated this by calculating the second moment. But a) their expression for the second moment contains a typo and b) they provided neither the moments of $\max_{1\leq i\leq n}\{Y_i\}$ that are crucial in their approach nor a reference to them. A more direct approach for calculating the moments would be to integrate (\ref{eq; 555}) over $[0,1]$. Actually, this was proposed by them (\cite[Remark 2]{onn2011generating}) as an additional way to derive (\ref{eq; ex}). To the best of our knowledge, the following formula for the moments of $\max(\Theta)$ has not appeared before:

\begin{theorem}
Let $\Theta\sim\mathcal{U}(\Delta^n)$ and let $t\in\N$. Then $$\E{\max(\Theta)^t}=\frac{1}{\binom{n-1+t}{t}}\sum_{k=1}^{n}(-1)^{k-1}\binom{n}{k}\frac{1}{k^t}.$$ In particular, $$\E{\max(\Theta)} = \frac{1}{n}\sum_{k=1}^{n}\frac{1}{k}\;\;\;\textnormal{ and }\;\;\;\textnormal{Var}(\max(\Theta))=\frac{1}{n^{2}(n+1)}\left(n\sum_{k=1}^{n}\frac{1}{k^{2}}-\left(\sum_{k=1}^{n}\frac{1}{k}\right)^{2}\right).$$
\end{theorem}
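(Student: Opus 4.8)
The plan is to bypass any direct piecewise integration of Whitworth's formula~(\ref{eq; 555}) and instead lean on the exponential representation of $\Theta$ recalled above. If $Y_1,\ldots,Y_n$ are i.i.d.\ standard exponential and $Z=Y_1+\cdots+Y_n$, then one may take $\Theta=(Y_1/Z,\ldots,Y_n/Z)$, and the structural fact to exploit is that the normalizing sum $Z$ is \emph{independent} of the vector of ratios $(Y_1/Z,\ldots,Y_n/Z)$ (the standard Gamma--Dirichlet independence). Hence $\max_{1\le i\le n}Y_i=Z\cdot\max(\Theta)$ with the two factors independent, so for each $t\in\N$
\[
\E{\paren{\max_{1\le i\le n}Y_i}^{t}}=\E{Z^{t}}\,\E{\max(\Theta)^{t}},\qquad\text{equivalently}\qquad \E{\max(\Theta)^{t}}=\frac{\E{\paren{\max_{1\le i\le n}Y_i}^{t}}}{\E{Z^{t}}},
\]
and the problem splits into two independent moment computations.

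First I would compute the denominator: $Z$ has the Gamma distribution with shape $n$ and unit rate, so $\E{Z^{t}}=\Gamma(n+t)/\Gamma(n)=n(n+1)\cdots(n+t-1)=t!\binom{n-1+t}{t}$, which is precisely the normalizing factor in the statement. For the numerator, $\max_{1\le i\le n}Y_i$ has cumulative distribution function $y\mapsto(1-e^{-y})^{n}$ on $[0,\infty)$, so $\E{\paren{\max_{1\le i\le n}Y_i}^{t}}=\int_{0}^{\infty}t y^{t-1}\paren{1-(1-e^{-y})^{n}}\,dy$. Expanding $(1-e^{-y})^{n}$ by the binomial theorem gives $1-(1-e^{-y})^{n}=\sum_{k=1}^{n}(-1)^{k-1}\binom{n}{k}e^{-ky}$, and since $\int_{0}^{\infty}t y^{t-1}e^{-ky}\,dy=t!/k^{t}$, integrating term by term yields $\E{\paren{\max_{1\le i\le n}Y_i}^{t}}=t!\sum_{k=1}^{n}(-1)^{k-1}\binom{n}{k}k^{-t}$. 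Dividing by $\E{Z^{t}}$ gives the claimed formula; as a by-product this supplies the moments of $\max_{1\le i\le n}Y_i$ whose absence from~\cite{onn2011generating} was pointed out above.

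It remains to extract the two displayed special cases. For $t=1$ one has $\binom{n}{1}=n$ and the formula reduces to the classical evaluation $\sum_{k=1}^{n}(-1)^{k-1}\binom{n}{k}/k=\sum_{k=1}^{n}1/k$; for $t=2$ one has $\binom{n+1}{2}=n(n+1)/2$ and one needs $\sum_{k=1}^{n}(-1)^{k-1}\binom{n}{k}/k^{2}=\tfrac12\paren{\paren{\sum_{k=1}^{n}1/k}^{2}+\sum_{k=1}^{n}1/k^{2}}$, after which $\textnormal{Var}(\max(\Theta))=\E{\max(\Theta)^{2}}-\E{\max(\Theta)}^{2}$ rearranges algebraically into the stated expression. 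Both identities are instances of one clean fact: $\sum_{k=1}^{n}(-1)^{k-1}\binom{n}{k}k^{-t}$ equals the complete homogeneous symmetric function $h_{t}(1,\tfrac12,\ldots,\tfrac1n)=\sum_{1\le i_{1}\le\cdots\le i_{t}\le n}(i_{1}\cdots i_{t})^{-1}$, which one reads off as the coefficient of $z^{t}$ in the partial-fraction expansion $n!/\prod_{i=1}^{n}(i-z)=\sum_{k=1}^{n}(-1)^{k-1}k\binom{n}{k}/(k-z)$.

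I expect the only genuine friction to be bookkeeping rather than ideas: stating the independence of $Z$ and $\max(\Theta)$ precisely (it is standard, but it is the crux of the reduction), and, for the ``in particular'' part, choosing whether to cite the harmonic-sum identities or to derive them from the generating-function computation just mentioned. A second route --- integrating~(\ref{eq; 555}) piecewise over the interval $(1/n,1]$, as suggested in~\cite{onn2011generating} --- is also elementary, but collapsing the resulting double binomial sum back down to $\sum_{k=1}^{n}(-1)^{k-1}\binom{n}{k}k^{-t}$ is distinctly more laborious, so I would keep it only as a remark.
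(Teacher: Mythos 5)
Your argument is correct, but it is a genuinely different route from the one taken in the paper. The paper proves the theorem by splitting $\Delta^{n,u}$ according to which coordinate is maximal, using symmetry to reduce to $\frac{n(n-1)}{u^{n-1}}\int_{u/n}^{u}\theta_1^t(u-\theta_1)^{n-2}\PP(\max(\Theta^{n-1,u-\theta_1})\leq\theta_1)\,d\theta_1$, inserting Whitworth's formula piecewise on the intervals $[\tfrac{u}{n-m},\tfrac{u}{n-m-1}]$, and then collapsing the resulting double sums through a chain of auxiliary identities (the incomplete-beta evaluation, the binomial orthogonality relation of Lemma \ref{lem; LET}, the function $F(x,y,n,r)$, and the recursion for $f(n,t)$ in Lemma \ref{lem; 53}) --- exactly the laborious bookkeeping you predicted and chose to avoid. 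You instead exploit the representation $\Theta=(Y_1/Z,\ldots,Y_n/Z)$ with $Y_i$ i.i.d.\ exponential and the Gamma--Dirichlet independence of $Z$ and $\Theta$, so that $\E{\max(\Theta)^t}=\E{(\max_i Y_i)^t}/\E{Z^t}$; both moments are immediate ($\E{Z^t}=t!\binom{n-1+t}{t}$ and, via $1-(1-e^{-y})^n=\sum_{k=1}^n(-1)^{k-1}\binom{n}{k}e^{-ky}$, $\E{(\max_i Y_i)^t}=t!\sum_{k=1}^n(-1)^{k-1}\binom{n}{k}k^{-t}$). This is in essence the probabilistic approach of \cite{onn2011generating} carried out in full for all $t$, which the paper deliberately did not pursue, noting precisely that the moments of $\max_i Y_i$ were missing there; your computation supplies them. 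What the paper's route buys is self-containedness within the simplex-volume framework and another showcase of the dimension-reduction technique that is its theme (it also yields Whitworth's formula itself as an intermediate result); what yours buys is a much shorter proof, the exponential-maximum moments as a by-product, and --- through the observation that $\sum_{k=1}^n(-1)^{k-1}\binom{n}{k}k^{-t}=h_t(1,\tfrac12,\ldots,\tfrac1n)$, a complete homogeneous symmetric function and hence a polynomial in the power sums $H_n^{(s)}$ --- an affirmative answer to the question raised in Remark \ref{rem; 33}(2). The only point to state carefully is the independence of $Z$ and the normalized vector (standard, but it is the crux), and for the ``in particular'' part you still need the $t=1,2$ evaluations, which coincide with Lemma \ref{lem; 53} or follow from your generating-function identity; the final variance algebra checks out.
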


\section{Preliminaries}\label{sec; pre}

Henceforth, $u$ is a positive real number and unless stated otherwise, $n\in\N$ satisfies $n\geq 2$. We denote $[n]=\{1,2,\ldots,n\}$ and $\mathbf{1}=(1,1,\ldots,1)\in\R^n$.

\begin{definition}\label{def; simplex}
The \emph{$n$-simplex (of size $u$)} is defined to be
$$ \Delta^{n,u} =   \{(\theta_1,\ldots,\theta_n)\in\R^n\; |\; 
  \theta_1 + \cdots + \theta_n = u,\;  \theta_i \geq 0, \;1\leq i \leq n\}.$$
If $u=1$ then $\Delta^n = \Delta^{n,1}$ is merely the \emph{probability $n$-simplex}.
\end{definition}

\begin{remark}
The reason for considering arbitrary $n$-simplices (as opposed to restricting to the probability $n$-simplex) lies in the inductive approach we take which results in vectors whose coordinates do not necessarily sum to $1$.
\end{remark}


This work studies random vectors that are uniformly distributed on $\Delta^{n,u}$. Such vectors will be denoted by $\Theta^{n,u} = (\Theta_1^{n,u},\ldots,\Theta_n^{n,u})$ and we will write $\Theta^{n,u}\sim\mathcal{U}(\Delta^{n,u})$. If confusion is unlikely, we write shortly $\Theta = \Theta^{n,u}$ and $\Theta_i = \Theta_i^{n,u}, 1\leq i\leq n$. We shall use lowercase letter such as $\theta=(\theta_1,\ldots,\theta_n), x=(x_1,\ldots,x_n)$, etc., to denote points in $\R^n$. For $\Theta\sim\mathcal{U}(\Delta^{n,u})$ the random variable $\max(\Theta)$ is defined by $$
\max(\Theta)=\max_{1\leq i\leq n}\{\Theta_i\}.$$

We wish to derive the joint cumulative distribution function of $\Theta\sim\mathcal{U}(\Delta^{n,u})$. Notice that ‘uniformly’ means that for a  subset $C\subseteq\Delta^{n,u}$ it holds $$ \PP(\Theta\in C)=\frac{\text{Vol}(C)}{\text{Vol}(\Delta^{n,u})}=\frac{\int_{C} 1dV}{\int_{\Delta^{n,u}} 1dV}$$ where both integrals are over $n-1$-manifolds (cf. \cite[\S 25]{munkres2018analysis}). Thus, we shall make extensive use of the volume of the $n$-simplex:

\begin{lemma}\label{lem 2}
It holds $$\textnormal{Vol}(\Delta^{n,u}) = \frac{\sqrt{n}u^{n-1}}{(n-1)!}.$$ 
\end{lemma}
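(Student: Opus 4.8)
The plan is to realize $\Delta^{n,u}$ as the image of a single affine parametrization over a solid simplex in $\R^{n-1}$ and to apply the area formula for parametrized $(n-1)$-manifolds (cf. \cite[\S 25]{munkres2018analysis}). First I would set
$$S = \left\{(\theta_1,\ldots,\theta_{n-1})\in\R^{n-1}\;\middle|\;\theta_i\geq 0,\;\sum_{i=1}^{n-1}\theta_i\leq u\right\}$$
and define $\phi\colon S\to\R^n$ by $\phi(\theta_1,\ldots,\theta_{n-1})=(\theta_1,\ldots,\theta_{n-1},\,u-\theta_1-\cdots-\theta_{n-1})$. Since the constraints $\theta_1+\cdots+\theta_n=u$ and $\theta_i\geq 0$ are together equivalent to $(\theta_1,\ldots,\theta_{n-1})\in S$ with $\theta_n=u-\sum_{i<n}\theta_i$, the map $\phi$ is a bijection of $S$ onto $\Delta^{n,u}$, hence a valid global parametrization.

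Next I would compute the area element. Because $\phi$ is affine, its derivative is the constant $n\times(n-1)$ matrix $D\phi=\begin{pmatrix}\identity_{n-1}\\ -\unit\trn\end{pmatrix}$, where $\unit\trn=(1,\ldots,1)$ has length $n-1$. The Gram matrix is then $D\phi\trn D\phi=\identity_{n-1}+\unit\unit\trn$, and by the matrix determinant lemma $\det(\identity_{n-1}+\unit\unit\trn)=1+\unit\trn\unit=1+(n-1)=n$. Thus the area-scaling factor $\sqrt{\det(D\phi\trn D\phi)}=\sqrt{n}$ is constant over $S$, and the area formula yields
$$\textnormal{Vol}(\Delta^{n,u})=\int_S\sqrt{\det(D\phi\trn D\phi)}\,d\lambda=\sqrt{n}\,\lambda(S),$$
where $\lambda$ denotes Lebesgue measure on $\R^{n-1}$.

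It then remains to record that $\lambda(S)=u^{n-1}/(n-1)!$. This is the standard value of the Euclidean volume of a solid simplex, which I would obtain by the substitution $\theta_i=u\,y_i$, scaling volume by $u^{n-1}$ and reducing to the unit case, followed by an easy induction on $n$ establishing $\lambda(\{y_i\geq 0,\;\sum_{i=1}^{n-1}y_i\leq 1\})=1/(n-1)!$ by integrating out the last coordinate. Combining the two displays gives $\textnormal{Vol}(\Delta^{n,u})=\sqrt{n}\,u^{n-1}/(n-1)!$, as claimed.

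The only genuine subtlety is conceptual rather than computational: one must keep in mind that the relevant volume is the $(n-1)$-dimensional surface measure of the flat simplex sitting \emph{inside} $\R^n$, not the $(n-1)$-dimensional Lebesgue measure of its projection onto a coordinate hyperplane. These two quantities differ exactly by the factor $\sqrt{n}$, which is the geometric content of the Gram determinant and reflects the tilt of the hyperplane $\{\theta_1+\cdots+\theta_n=u\}$ relative to $\{\theta_n=0\}$. Once the parametrization $\phi$ is in place, the affine structure makes the Gram determinant constant and the remaining work is entirely mechanical.
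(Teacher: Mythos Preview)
Your argument is correct: the parametrization $\phi$ is a global chart for $\Delta^{n,u}$, the Gram matrix $D\phi\trn D\phi=\identity_{n-1}+\unit\unit\trn$ has determinant $n$ by the matrix determinant lemma, and the Lebesgue volume of the solid simplex $S$ is $u^{n-1}/(n-1)!$, so the area formula gives exactly $\sqrt{n}\,u^{n-1}/(n-1)!$. The paper does not actually prove the lemma but simply cites \cite[Lemma~2.2]{Fried2021OnTR}; your write-up therefore supplies strictly more than the paper does, and the approach you take---projecting onto the first $n-1$ coordinates and computing the constant Jacobian factor $\sqrt{n}$---is the standard one and presumably coincides with the cited reference.
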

\begin{proof}
See, for example, \cite[Lemma 2.2]{Fried2021OnTR}.
\end{proof}

\begin{lemma}\label{lem; uni}
Suppose $\Theta\sim\mathcal{U}(\Delta^{n,u})$ and let  $(\theta_1,\ldots,\theta_n)\in\R^n$. The joint cumulative distribution function of $\Theta$ is given by $$\mathbb{P}(\Theta_1\leq\theta_1,\ldots,\Theta_n\leq\theta_n)=\frac{1}{u^{n-1}} \begin{cases}
    \sum_{K\subseteq[n]}(-1)^{|K|}\left(\max\left\{u-\sum_{k\in K}\theta_k,0\right\}\right)^{n-1}&\textnormal{if }\theta_1,\ldots,\theta_n>0\\0&\textnormal{otherwise}. \end{cases}$$
\end{lemma}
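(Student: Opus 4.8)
The plan is to express the joint cumulative distribution function as a ratio of $(n-1)$-dimensional volumes and to evaluate it by inclusion--exclusion over the complementary events $\{\Theta_k>\theta_k\}$, each of which reduces, via a suitable translation, to a smaller simplex whose volume is supplied by Lemma \ref{lem 2}.

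First I would treat the main case $\theta_1,\dots,\theta_n>0$. Since $\Theta$ is uniform on $\Delta^{n,u}$, the probability $\PP\big(\bigcap_{i\in[n]}\{\Theta_i\le\theta_i\}\big)$ equals the $\mathrm{Vol}$-quotient of the corresponding subset of $\Delta^{n,u}$; moreover, strict and non-strict inequalities may be interchanged throughout, since the faces on which some $\Theta_i$ equals a fixed value are $(n-1)$-dimensionally null. Passing to complements and applying inclusion--exclusion yields
\[
\PP\Big(\bigcap_{i\in[n]}\{\Theta_i\le\theta_i\}\Big)=\sum_{K\subseteq[n]}(-1)^{|K|}\,\PP\Big(\bigcap_{k\in K}\{\Theta_k\ge\theta_k\}\Big),
\]
the term $K=\emptyset$ contributing $1$.

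The crux is then to compute each term on the right. Writing $u_K:=u-\sum_{k\in K}\theta_k$, I would use the translation $x\mapsto x-w$, where $w_k=\theta_k$ for $k\in K$ and $w_k=0$ otherwise: it carries the hyperplane $\{\sum_i x_i=u\}$ isometrically onto $\{\sum_i y_i=u_K\}$ and maps $\{x\in\Delta^{n,u}:x_k\ge\theta_k\text{ for }k\in K\}$ bijectively onto $\Delta^{n,u_K}$ when $u_K\ge0$, and onto $\emptyset$ when $u_K<0$. Since a translation preserves $(n-1)$-dimensional volume, Lemma \ref{lem 2} gives
\[
\PP\Big(\bigcap_{k\in K}\{\Theta_k\ge\theta_k\}\Big)=\frac{\mathrm{Vol}\big(\Delta^{n,\max\{u_K,0\}}\big)}{\mathrm{Vol}(\Delta^{n,u})}=\frac{\big(\max\{u_K,0\}\big)^{n-1}}{u^{n-1}},
\]
where the degenerate case $u_K=0$ contributes $0$ (recall $n\ge2$), consistently with the $\max\{\cdot,0\}$; summing over $K$ produces the claimed formula. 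Finally, if some $\theta_{i}\le0$, then $\{\Theta_{i}\le\theta_{i}\}$ is either empty (when $\theta_i<0$, since $\Theta_i\ge0$) or contained in the measure-zero face $\{\Theta_i=0\}$ of $\Delta^{n,u}$ (when $\theta_i=0$), so the joint cumulative distribution function vanishes, matching the second branch.

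The only point I expect to require genuine care is the justification that the above translation is an isometry between the two parallel affine hyperplanes and hence volume-preserving for the $(n-1)$-dimensional measure, together with the bookkeeping of the empty ($u_K<0$) and degenerate ($u_K=0$) cases inside the inclusion--exclusion sum; the rest is routine.
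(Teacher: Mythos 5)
Your proof is correct, but it follows a genuinely different route from the paper. The paper does not argue probabilistically at all: it imports Theorems 1 and 4 of Marichal--Mossinghoff on slices and slabs of the hypercube, relates $\mathrm{Vol}(\Delta^{n,u}\cap I^\theta)$ to $\frac{\partial}{\partial u}\mathrm{Vol}(G^{\mathbf 1,u}\cap I^\theta)$, rescales coordinates to reduce to the known volume of $G^{\theta,u}\cap I^{\mathbf 1}$, and then divides by $\mathrm{Vol}(\Delta^{n,u})$. You instead run inclusion--exclusion directly over the complementary events $\{\Theta_k>\theta_k\}$ and evaluate each term $\PP\big(\bigcap_{k\in K}\{\Theta_k\ge\theta_k\}\big)$ by translating the corresponding subset of $\Delta^{n,u}$ isometrically onto $\Delta^{n,\max\{u_K,0\}}$ and invoking Lemma \ref{lem 2}; in effect you prove, for every $K\subseteq[n]$, the survival-type identity that the paper only quotes afterwards (the remark citing Devroye's Lemma 2.1, which is your $K=[n]$ case with $u=1$), and then sum. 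Your treatment of the edge cases (strict versus non-strict inequalities, $u_K<0$ empty, $u_K=0$ degenerate, and some $\theta_i\le 0$ forcing the CDF to vanish) is sound, since all the exceptional sets lie in affine subspaces of dimension at most $n-2$ and hence are null for the $(n-1)$-dimensional measure. What your argument buys is self-containedness and transparency: the only external input is the simplex volume formula plus translation invariance of $(n-1)$-dimensional volume on parallel hyperplanes. What the paper's route buys is brevity on the page and a connection to the more general hypercube-slicing machinery (arbitrary weights $\theta$ in $G^{\theta,u}\cap I^{\mathbf 1}$), at the cost of outsourcing the combinatorial core to an external reference.
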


\begin{proof}
The claim follows from \cite[Theorems 1 and 4]{marichal2008slices} (and their proofs) with some minor adjustments. Here, we only sketch the proof and the reader is referred to \cite{marichal2008slices} for the details. 
For $w=(w_1,\ldots,w_n)\in\R^n$ we denote \begin{align}G^{w,u}=&\{(x_1,\ldots,x_n)\in\R^n\;|\; \sum_{i=1}^n w_i x_i\leq u\},\nonumber \\ I^w =&\{(x_1,\ldots,x_n)\in\R^n\;|\; 0 \leq x_i\leq w_i,\; 1\leq i\leq n\}\nonumber. \end{align}
Set $\theta=(\theta_1,\ldots,\theta_n)$.
The same reasoning in the proof of \cite[Theorem 4]{marichal2008slices} may be applied in order to show that
$$
\text{Vol}(\Delta^{n,u}\cap I^\theta)=\sqrt{n}\frac{\partial}{\partial u}\text{Vol}(G^{\mathbf{1},u}\cap I^\theta).$$ The substitution $(x_1,\ldots,x_n)\mapsto(\theta_1x_1,\ldots,\theta_nx_n)$ gives $$\text{Vol}(G^{\mathbf{1},u}\cap I^\theta)=\left(\prod_{i=1}^n\theta_i\right)\text{Vol}(G^{\theta,u}\cap I^\mathbf{1}).$$ By \cite[Theorem 1]{marichal2008slices}, $$\text{Vol}(G^{\theta,u}\cap I^\mathbf{1})=\frac{1}{n!\left(\prod_{i=1}^n\theta_i\right)}\sum_{K\subseteq[n]}(-1)^{|K|}\left(\max\left\{u-\sum_{k\in K}\theta_k,0\right\}\right)^n.$$ Thus, $$\text{Vol}(\Delta^{n,u}\cap I^\theta)=\frac{\sqrt{n}}{(n-1)!}\sum_{K\subseteq[n]}(-1)^{|K|}\left(\max\left\{u-\sum_{k\in K}\theta_k,0\right\}\right)^{n-1}.$$ Dividing both sides of the equation by $\text{Vol}(\Delta^{n,u})$ completes the proof.
\end{proof}

\begin{remark}
 In the notation of the previous lemma, \cite[Lemma 2.1]{devroye2006nonuniform} shows that for $\theta_1,\ldots,\theta_n\geq 0$ it holds $$P(\Theta_1>\theta_1,\ldots,\Theta_n>\theta_n) =\left(\max\left\{1-\sum_{k=1}^n \theta_k,0\right\}\right)^{n-1}.$$ 
\end{remark}

\section{The hazard rate order}\label{sec; hr}

\bigskip
The following definition is a modification of the definition in \cite[1.B.10 on p. 17]{shaked2007stochastic}:

\begin{definition}\label{def; fosd}
Let $\theta=(\theta_1,\ldots,\theta_n), \theta'=(\theta'_1,\ldots,\theta'_n)\in \Delta^{n,u}$. We say that \emph{$\theta$ is smaller than $\theta'$ in the hazard rate order} and write $\theta\leq\hr \theta'$ if \begin{equation}\label{eq; 67} \Big(\sum_{k=i}^n \theta_k\Big)\Big(\sum_{k=j}^n \theta'_k\Big)\geq\Big(\sum_{k=j}^n \theta_k\Big)\Big(\sum_{k=i}^n \theta'_k\Big), \;\;\forall 1\leq i\leq j\leq n.\end{equation}
\end{definition}

The following lemma shows that given the last coordinate, comparability with respect to the hazard rate order can be verified in one dimension less. Its proof is easy and we omit it.

\begin{lemma}\label{lem 1}
Let $\theta=(\theta_1,\ldots,\theta_{n+1}), \theta'=(\theta'_1,\ldots,\theta'_{n+1})\in\Delta^{n+1,u}$ and assume $\theta'_{1}<u$. Then $\theta\leq\hr \theta'  \;(\text{in } \Delta^{n+1,u})$ if and only if $\theta'_1\leq\theta_1$ and
$$
(\theta_2/v, \ldots, \theta_{n+1}/v) \leq\hr  (\theta'_2,\ldots,\theta'_{n+1})\;( \text{in }\Delta^{n,u-\theta'_1}) \text{ where } v = \frac{\sum_{k=2}^{n+1}\theta_k}{u-\theta'_1}.
$$
\end{lemma}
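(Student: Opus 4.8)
The plan is to translate both sides of the claimed equivalence into the tail-sum inequalities of Definition~\ref{def; fosd} and match them term by term. For the ambient vectors in $\Delta^{n+1,u}$ put $S_i = \sum_{k=i}^{n+1}\theta_k$ and $S_i' = \sum_{k=i}^{n+1}\theta_k'$, so that $S_1 = S_1' = u$ and $\theta\le\hr\theta'$ means $S_iS_j'\ge S_jS_i'$ for all $1\le i\le j\le n+1$. Write $u' = u-\theta_1'$, which is positive by hypothesis, and denote the reduced vectors by $\tilde\theta=(\theta_2/v,\ldots,\theta_{n+1}/v)$ and $\tilde\theta'=(\theta_2',\ldots,\theta_{n+1}')$, with $v=S_2/u'$ and tail sums $\tilde S_i,\tilde S_i'$ for $1\le i\le n$. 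The degenerate case $\theta_1=u$, i.e.\ $\theta=(u,0,\ldots,0)$, is handled separately and trivially: then $S_i=0$ for $i\ge 2$, so $\theta\le\hr\theta'$ holds for every $\theta'$ and $\theta_1'\le u$ always, matching the asserted equivalence with the reduced-vector clause vacuous. Henceforth assume $\theta_1<u$, so that $S_2>0$ and $v>0$; a one-line reindexing then gives the dictionary $\tilde S_i'=S_{i+1}'$ and $\tilde S_i=(u'/S_2)\,S_{i+1}$ for $1\le i\le n$, and in particular $\tilde S_1=\tilde S_1'=u'$, so $\tilde\theta,\tilde\theta'\in\Delta^{n,u'}$.

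For the forward implication, suppose $\theta\le\hr\theta'$. Taking $i=1$ in the defining inequalities gives $uS_j'\ge uS_j$, hence $S_j'\ge S_j$ for every $j$; with $j=2$ this reads $u-\theta_1'\ge u-\theta_1$, i.e.\ $\theta_1'\le\theta_1$. For the reduced comparability, fix $1\le a\le b\le n$ and apply the inequality with $(i,j)=(a+1,b+1)$ to obtain $S_{a+1}S_{b+1}'\ge S_{b+1}S_{a+1}'$; multiplying by $u'/S_2>0$ and substituting the dictionary yields $\tilde S_a\tilde S_b'\ge\tilde S_b\tilde S_a'$, which is exactly $\tilde\theta\le\hr\tilde\theta'$ in $\Delta^{n,u'}$.

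For the reverse implication, assume $\theta_1'\le\theta_1$ and $\tilde\theta\le\hr\tilde\theta'$, and verify the inequalities of Definition~\ref{def; fosd} for all $1\le i\le j\le n+1$. When $2\le i\le j$, apply the reduced order with indices $(i-1,j-1)$ and clear the common factor $u'/S_2>0$ to get $S_iS_j'\ge S_jS_i'$. When $i=1$ the required inequality is $S_j'\ge S_j$: it is trivial for $j=1$, and for $j\ge 2$ the reduced order with indices $(1,j-1)$ gives $u'S_j'\ge(u'/S_2)\,S_j\,u'$, i.e.\ $S_j'\ge S_j/v$. Here the scalar hypothesis $\theta_1'\le\theta_1$ is what finishes the job: it says $S_2=u-\theta_1\le u-\theta_1'=u'$, so $v\le 1$ and hence $S_j/v\ge S_j$, giving $S_j'\ge S_j$.

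The argument is bookkeeping once the dictionary between the tail sums of $\theta,\theta'$ and those of $\tilde\theta,\tilde\theta'$ is in place. The one subtle point, and the step I would flag as the crux, is the $i=1$ row in the reverse direction: the lower-dimensional order by itself only delivers $S_j'\ge S_j/v$, and it is the separate scalar constraint $\theta_1'\le\theta_1$ (equivalently $v\le 1$) that upgrades this to $S_j'\ge S_j$.
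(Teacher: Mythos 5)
Your proof is correct: the paper omits the argument as easy, and your tail-sum dictionary with the two directions (including the observation that the scalar condition $\theta_1'\leq\theta_1$, i.e.\ $v\leq 1$, is exactly what upgrades $S_j'\geq S_j/v$ to $S_j'\geq S_j$ in the $i=1$ rows) is precisely the intended routine verification. Your separate treatment of the degenerate case $\theta_1=u$, where $v=0$ and the normalized vector is undefined, is a reasonable and careful addition rather than a deviation.
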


\begin{lemma}\label{lem 3}
Suppose $\Theta\sim\mathcal{U}(\Delta^{n,u})$ and let $\theta=(\theta_1,\ldots,\theta_n)\in\Delta^{n,u}$. Then 
$$P(\Theta\geq\hr \theta)=\prod_{i=1}^{n-1}\frac{\Big(\sum_{j=i}^n \theta_j\Big)^{n-i}-\Big(\sum_{j=i+1}^n \theta_j\Big)^{n-i}}   {\Big(\sum_{j=i}^n \theta_j\Big)^{n-i}} .$$
\end{lemma}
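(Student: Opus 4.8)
The plan is to compute $P(\Theta \geq\hr \theta)$ by induction on $n$ using the dimension-reduction principle of Lemma \ref{lem 1}. The base case $n=1$ is trivial (the product is empty and equals $1$, consistent with $P(\Theta\geq\hr\theta)=1$ in $\Delta^{1,u}$). For the inductive step, I would condition on the first coordinate $\Theta_1$ of $\Theta=(\Theta_1,\ldots,\Theta_n)\sim\mathcal{U}(\Delta^{n,u})$. Writing $\theta=(\theta_1,\ldots,\theta_n)$, Lemma \ref{lem 1} (after relabelling so that $\theta_1$ plays the role of the distinguished coordinate) says that $\Theta\geq\hr\theta$ holds if and only if $\Theta_1\leq\theta_1$ and the renormalized tail $(\Theta_2,\ldots,\Theta_n)$, rescaled to lie in $\Delta^{n-1,\,u-\Theta_1}$, dominates $(\theta_2,\ldots,\theta_n)$ in the hazard rate order. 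Here I would need the standard facts that (i) the first coordinate $\Theta_1$ of a uniform point on $\Delta^{n,u}$ has density proportional to $(u-s)^{n-2}$ on $[0,u]$, i.e. $\Theta_1$ has the Beta-type density $\frac{(n-1)(u-s)^{n-2}}{u^{n-1}}$, and (ii) conditionally on $\Theta_1=s$, the vector $(\Theta_2,\ldots,\Theta_n)$ is uniform on $\Delta^{n-1,\,u-s}$, so its normalization is uniform on $\Delta^{n-1,\,u-s}$ in the sense needed. Both follow from Lemma \ref{lem; uni} or from the Dirichlet/Gamma representation mentioned in the introduction.

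The key computation is then
\begin{equation*}
P(\Theta\geq\hr\theta) = \int_0^{\theta_1} \frac{(n-1)(u-s)^{n-2}}{u^{n-1}}\, Q(s)\, ds,
\end{equation*}
where $Q(s)$ is the conditional probability that $(\Theta_2,\ldots,\Theta_n)$, uniform on $\Delta^{n-1,\,u-s}$, dominates $(\theta_2,\ldots,\theta_n)$ in $\leq\hr$ — except one must be careful, because $(\theta_2,\ldots,\theta_n)$ has coordinate sum $u-\theta_1$, not $u-s$, so strictly speaking one first rescales $(\theta_2,\ldots,\theta_n)$ by the factor $\frac{u-s}{u-\theta_1}$ to land in $\Delta^{n-1,\,u-s}$; since $\leq\hr$ is scale-invariant (multiplying all coordinates of both vectors by a common positive constant does not change \eqref{eq; 67}), this rescaling is harmless. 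Applying the inductive hypothesis to the $(n-1)$-dimensional problem on $\Delta^{n-1,\,u-s}$ gives
\begin{equation*}
Q(s) = \prod_{i=2}^{n-1}\frac{\bigl(\sum_{j=i}^n \theta_j\bigr)^{n-i} - \bigl(\sum_{j=i+1}^n \theta_j\bigr)^{n-i}}{\bigl(\sum_{j=i}^n \theta_j\bigr)^{n-i}},
\end{equation*}
which, crucially, does not depend on $s$ (the $u$-dependence cancels in each ratio, again by scale-invariance). Hence $Q(s)$ pulls out of the integral, and
\begin{equation*}
\int_0^{\theta_1}\frac{(n-1)(u-s)^{n-2}}{u^{n-1}}\,ds = \frac{u^{n-1}-(u-\theta_1)^{n-1}}{u^{n-1}} = \frac{\bigl(\sum_{j=1}^n\theta_j\bigr)^{n-1}-\bigl(\sum_{j=2}^n\theta_j\bigr)^{n-1}}{\bigl(\sum_{j=1}^n\theta_j\bigr)^{n-1}},
\end{equation*}
which is exactly the $i=1$ factor of the claimed product. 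Multiplying this by $Q(s)$ yields the full product $\prod_{i=1}^{n-1}$, completing the induction.

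The main obstacle I anticipate is bookkeeping rather than a genuine difficulty: one must state Lemma \ref{lem 1} with the \emph{first} coordinate distinguished (the excerpt's version distinguishes $\theta_1$ on one side and renormalizes $\theta_2,\ldots$, so it already fits), verify carefully that the scale-invariance of $\leq\hr$ makes both the rescaling of $(\theta_2,\ldots,\theta_n)$ and the $s$-independence of $Q(s)$ legitimate, and confirm the marginal/conditional distributional facts about a uniform point on $\Delta^{n,u}$. One should also check the edge cases where some $\theta_i=0$: if $\theta_n=0$ (or more generally a suffix vanishes) the corresponding factors are $1$ and the argument degenerates gracefully; if $\theta_1=0$ then the domination condition forces $\Theta_1=0$, an event of probability zero, again matching the formula since its first factor is then $0$. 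Once these points are nailed down, the integral is elementary and the induction closes cleanly.
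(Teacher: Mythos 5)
Your proposal is correct and follows essentially the same route as the paper: induction on the dimension, conditioning on the first coordinate of the dominating vector via Lemma \ref{lem 1}, using its marginal density $\tfrac{(n-1)(u-s)^{n-2}}{u^{n-1}}$ together with the scale-invariance of $\leq\hr$ so that the inductive factor is independent of $s$ and the remaining integral yields exactly the $i=1$ factor. The only cosmetic slip is the phrase about renormalizing $(\Theta_2,\ldots,\Theta_n)$ --- that tail already lies in $\Delta^{n-1,\,u-s}$; it is $(\theta_2,\ldots,\theta_n)$ that gets rescaled, precisely as in Lemma \ref{lem 1}, and you correct this yourself in the subsequent parenthetical.
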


\begin{proof}
We proceed by induction. 
For $n=2$, condition (\ref{eq; 67}) comes down to $\theta'_2\geq\theta_2$. Thus, $$P(\Theta\geq\hr \theta)= \frac{1}{u}\int_{\theta_2}^ud\theta_2'=\frac{u-\theta_2}{u}.$$
Suppose the claim holds for $n$ and let $\theta=(\theta_1,\ldots,\theta_{n+1})\in\Delta^{n+1,u}$. Then 
\begin{align}
P(\Theta^{n+1,u}\geq\hr \theta) = &\frac{n!}{u^n}\int_0^{\theta_1}\frac{(u-\theta'_1)^{n-1}}{(n-1)!}P(\Theta^{n,u-\theta'_1}\geq\hr(\theta_2/v,\ldots,\theta_{n+1}/v))d\theta'_1 \nonumber \\ =&\frac{1}{u^n}\prod_{i=1}^{n-1}\frac{\Big(\sum_{j=i+1}^{n+1} \theta_j\Big)^{n-i}-\Big(\sum_{j=i+2}^{n+1} \theta_j\Big)^{n-i}} {\Big(\sum_{j=i+1}^{n+1} \theta_j\Big)^{n-i}}(u^n-(u-\theta_1)^n) \nonumber \\ = & \prod_{i=1}^n\frac{\Big(\sum_{j=i}^{n+1} \theta_j\Big)^{n+1-i}-\Big(\sum_{j=i+1}^{n+1} \theta_j\Big)^{n+1-i}} {\Big(\sum_{j=i}^{n+1} \theta_j\Big)^{n+1-i}}\nonumber.
\end{align}
\end{proof}
We come now to the main result of this section:
\begin{theorem}
Suppose $\Theta,\Theta'\sim\mathcal{U}(\Delta^{n,u})$ are independent. Then $$P(\Theta \leq\hr \Theta')=\frac{1}{2^{n-1}}.$$
\end{theorem}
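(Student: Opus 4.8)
\emph{Proof proposal.} The plan is to prove the statement by induction on $n$, following the dimension reduction of Lemma \ref{lem 1} exactly as in the proof of Lemma \ref{lem 3}. Denote $f(n)=P(\Theta\leq\hr\Theta')$ for independent $\Theta,\Theta'\sim\mathcal{U}(\Delta^{n,u})$. First note that $f(n)$ does not depend on $u$: by (\ref{eq; 67}) the relation $\leq\hr$ is unchanged when both vectors are multiplied by a fixed positive constant, and multiplying a vector uniform on $\Delta^{n,u}$ by $c>0$ produces a vector uniform on $\Delta^{n,cu}$, both facts being immediate from Definition \ref{def; simplex} and Lemma \ref{lem 2}. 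For the base case $n=2$, the proof of Lemma \ref{lem 3} shows that $\theta\leq\hr\theta'$ reduces to $\theta_2\leq\theta_2'$, and since $\Theta_2,\Theta_2'$ are independent and uniform on $[0,u]$ this gives $f(2)=\tfrac12=2^{-(2-1)}$.

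For the inductive step, assume $f(n)=2^{-(n-1)}$ and take independent $\Theta,\Theta'\sim\mathcal{U}(\Delta^{n+1,u})$. As recalled in the proof of Lemma \ref{lem 3}, the coordinate $\Theta_1$ has density $t\mapsto n(u-t)^{n-1}/u^{n}$ on $[0,u]$, and conditionally on $\Theta_1=s$ the vector $(\Theta_2,\ldots,\Theta_{n+1})$ is distributed as $\mathcal{U}(\Delta^{n,u-s})$; the same holds for $\Theta'$. Since $P(\Theta_1'=u)=0$ and $P(\Theta_1<u)=1$, Lemma \ref{lem 1} applies almost surely and gives
$$
\{\Theta\leq\hr\Theta'\}=\{\Theta_1'\leq\Theta_1\}\cap\Big\{(\Theta_2/V,\ldots,\Theta_{n+1}/V)\leq\hr(\Theta_2',\ldots,\Theta_{n+1}')\ \text{ in }\ \Delta^{n,u-\Theta_1'}\Big\},\qquad V=\frac{u-\Theta_1}{u-\Theta_1'}.
$$
Conditioning on $(\Theta_1,\Theta_1')=(s,t)$, on the event $\{t\leq s\}$ we have $V=(u-s)/(u-t)\in(0,1]$, the tails $(\Theta_2,\ldots,\Theta_{n+1})\sim\mathcal{U}(\Delta^{n,u-s})$ and $(\Theta_2',\ldots,\Theta_{n+1}')\sim\mathcal{U}(\Delta^{n,u-t})$ are conditionally independent, and by the scaling property noted above $(\Theta_2/V,\ldots,\Theta_{n+1}/V)$ is uniform on $\Delta^{n,u-t}$ and independent of $(\Theta_2',\ldots,\Theta_{n+1}')$. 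Hence the conditional probability of the second event equals $f(n)$, regardless of $s$ and $t$.

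Since $\Theta_1,\Theta_1'$ are iid with a continuous law, $P(\Theta_1'\leq\Theta_1)=\tfrac12$, so integrating out $(\Theta_1,\Theta_1')$ gives
$$
f(n+1)=P(\Theta_1'\leq\Theta_1)\cdot f(n)=\tfrac12\,f(n)=2^{-n},
$$
which closes the induction and proves the claim. I expect the only point that needs care to be the conditional structure invoked above: that the first coordinate of a uniform simplex vector has the stated Beta-type density, that conditioning on it leaves the remaining coordinates uniform on a simplex of smaller size, and that rescaling a uniform simplex vector by a positive constant keeps it uniform on the rescaled simplex. Each follows from Definition \ref{def; simplex} and Lemma \ref{lem 2}, and together they are exactly what makes the two rescaled tails an independent pair of uniform vectors, hence what yields the clean recursion $f(n+1)=\tfrac12 f(n)$.

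As an alternative one can bypass the induction and integrate the formula of Lemma \ref{lem 3} directly: applying that lemma to $\Theta'$ with $\Theta$ in the role of the fixed point gives $f(n)=\mathbb{E}_{\Theta}\big[\prod_{i=1}^{n-1}(1-U_i^{n-i})\big]$ where $U_i=\big(\sum_{j=i+1}^{n}\Theta_j\big)\big/\big(\sum_{j=i}^{n}\Theta_j\big)$, and representing $\Theta=(u/Z)(Y_1,\ldots,Y_n)$ with $Y_1,\ldots,Y_n$ iid exponential and $Z=Y_1+\cdots+Y_n$ turns the $U_i$ into independent $\mathrm{Beta}(n-i,1)$ random variables; since $\mathbb{E}[U_i^{n-i}]=\tfrac12$ for each $i$, this again yields $f(n)=\prod_{i=1}^{n-1}\tfrac12=2^{-(n-1)}$. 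In this second route the work lies in justifying the independence of the ratios $U_i$, namely the stick-breaking structure of the uniform distribution on the simplex.
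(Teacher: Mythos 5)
Your main argument is correct, and it takes a genuinely different route from the paper. The paper first proves Lemma \ref{lem 3} (the closed-form product expression for $P(\Theta\geq\hr\theta)$ with $\theta$ fixed) and then computes $P(\Theta\leq\hr\Theta')$ by integrating that product over the simplex via an explicit stick-breaking change of variables with Jacobian $\prod_i y_i^{n-i}$. You instead run the induction directly at the level of the pair $(\Theta,\Theta')$: conditioning both vectors on their first coordinates, invoking Lemma \ref{lem 1}, the conditional uniformity of the tail on the smaller simplex, and the scale invariance of both the hazard rate order and the uniform law, you get the clean recursion $f(n+1)=\tfrac12 f(n)$ with $f(2)=\tfrac12$. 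This bypasses Lemma \ref{lem 3} and the multi-dimensional integral entirely; the ingredients you do need (the marginal density of $\Theta_1$ and the conditional law of the remaining coordinates) are exactly the facts the paper already uses implicitly in the induction step of Lemma \ref{lem 3}, so nothing extra is assumed. The only technical point worth stating explicitly is that Lemma \ref{lem 1}'s rescaling also requires $\Theta_1<u$ (so that $V>0$), but this, like $\Theta_1'<u$, holds almost surely, so the argument is unaffected. Your sketched alternative, by contrast, is essentially the paper's own proof in probabilistic clothing: the integrand $\prod_{i=1}^{n-1}(1-U_i^{n-i})$ is exactly what the paper integrates, and the mutual independence of the ratios $U_i\sim\mathrm{Beta}(n-i,1)$ (via the exponential representation, or beta--gamma algebra) is precisely what the paper's change of variables verifies by hand; each route then reduces to $\mathbb{E}[U_i^{n-i}]=\tfrac12$ factor by factor. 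In short: the inductive pair argument is shorter and more conceptual, while the paper's computation yields Lemma \ref{lem 3} as a standalone result of independent interest along the way.
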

\begin{proof}
It holds 
\begin{align}
P(\Theta \leq\hr \Theta') = & \frac{(n-1)!}{\sqrt{n}u^{n-1}}\int_{\Delta^{n,u}} \prod_{i=1}^{n-1}\frac{\left(\sum_{j=i}^n \theta_j\right)^{n-i}-\left(\sum_{j=i+1}^n \theta_j\right)^{n-i}}   {\left(\sum_{j=i}^n \theta_j\right)^{n-i}}dV\nonumber \\ =& \frac{(n-1)!}{u^{n-1}}\int_0^u\int_0^{u-\theta_2} \cdots\int_0^{u-\sum_{i=2}^{n-1}\theta_i} \prod_{i=1}^{n-1}\frac{\left(\sum_{j=i}^n \theta_j\right)^{n-i}-\left(\sum_{j=i+1}^n \theta_j\right)^{n-i}}   {\left(\sum_{j=i}^n \theta_j\right)^{n-i}} d\theta_n\cdots d\theta_2 \nonumber  \\
=& \frac{(n-1)!}{u^{n-1}}\int_0^u\int_0^{u-\theta_2} \cdots\int_0^{u-\sum_{i=2}^{n-1}\theta_i}\frac{\left(\sum_{j=1}^n\theta_j\right)^{n-1}-\left(\sum_{j=2}^n\theta_j\right)^{n-1}}{\left(\sum_{j=1}^n\theta_j\right)^{n-1}} \cdot \nonumber \\ & \hspace{5.5cm}\prod_{i=2}^{n-1}\frac{\left(\sum_{j=i}^n \theta_j\right)^{n-i}-\left(\sum_{j=i+1}^n \theta_j\right)^{n-i}}   {\left(\sum_{j=i}^n \theta_j\right)^{n-i}} d\theta_n\cdots d\theta_2 \nonumber  \\ =& \frac{(n-1)!}{u^{n-1}}\int_0^u\int_0^{u-\theta_2} \cdots\int_0^{u-\sum_{i=2}^{n-1}\theta_i}\prod_{i=2}^{n-1}\left(1-\frac{\left(\sum_{j=i+1}^n \theta_j\right)^{n-i}}{\left(\sum_{j=i}^n \theta_j\right)^{n-i}}\right) d\theta_n\cdots d\theta_2 -\nonumber \\ &\hspace{1cm} \frac{(n-1)!}{u^{2(n-1)}}\int_0^u\int_0^{u-\theta_2} \cdots\int_0^{u-\sum_{i=2}^{n-1}\theta_i}\left(\left(\sum_{j=2}^n \theta_j\right)^{n-1}-\left(\sum_{j=2}^n \theta_j\right)\left(\sum_{j=3}^n \theta_j\right)^{n-2}\right)\cdot \nonumber \\& \hspace{5.5cm} \prod_{i=3}^{n-1}\left(1-\frac{\left(\sum_{j=i+1}^n \theta_j\right)^{n-i}}{\left(\sum_{j=i}^n \theta_j\right)^{n-i}}\right)d\theta_n\cdots d\theta_2.  \label{aa}
\end{align}
Consider the following substitution which is a variation of \cite[Exercise 9.13.1]{shurman2016calculus}: $$\theta_i=
\begin{cases}
    (1-y_{i+1}) \prod_{j=2}^iy_j,& 2\leq i \leq n-1\\
    \prod_{j=1}^ny_j,              & i=n.
\end{cases}$$ It is easily verified that the Jacobian is given by $\prod_{i=2}^{n-1} y_i^{n-i}$ and that $$\prod_{j=2}^iy_j=\sum_{j=i}^n \theta_j, \;\;2\leq i\leq n.$$ Thus, 

\begin{align}
(\ref{aa})  = &\frac{(n-1)!}{u^{n-1}}\int_0^u y_2^{n-2}dy_2\prod_{i=3}^{n-1}\int_0^1(1-y_{i}^{n+1-i})y_i^{n-i}dy_i\int_0^1 1-y_ndy_n -\nonumber \\ &\hspace{10 pt} \frac{(n-1)!}{u^{2(n-1)}}\int_0^u\int_0^1y_2^{2n-3}y_3^{n-3}-y_2^{2n-3}y_3^{2n-5}dy_3dy_2 \prod_{i=4}^{n-1}\int_0^1(1-y_{i}^{n+1-i})y_i^{n-i} dy_i\int_0^11-y_ndy_n \nonumber\\= &\frac{(n-1)!}{u^{n-1}}\frac{u^{n-1}}{n-1}\prod_{i=3}^{n-1}\frac{1}{2(n+1-i)}\frac{1}{2} -\nonumber \\ &\hspace{10 pt} \frac{(n-1)!}{u^{2(n-1)}}\left(\frac{u^{2(n-1)}}{2(n-1)(n-2)}-\frac{u^{2(n-1)}}{2(n-2)2(n-1)}\right)\prod_{i=4}^{n-1}\frac{1}{2(n+1-i)}\frac{1}{2} \nonumber \\ =& \frac{1}{2^{n-2}}- \frac{1}{2^{n-1}}=\frac{1}{2^{n-1}}. \nonumber
\end{align}
\end{proof}

\section{$\max(\Theta)$}\label{sec; max}

Identity (\ref{eq; 7}) in the following lemma is referred to by \cite[Exercise 5 on p. 213]{devroye2006nonuniform} as Whitworth's formula and goes back at least to 1897 when it appeared as an exercise in \cite[667 on p. 196]{whitworth1897dcc}. It has many proofs but, to the best of our knowledge, a proof that is based on a dimension reduction (inductive) has not appeared before. In Theorem \ref{thm; 5.7} we will integrate the right-hand side of (\ref{eq; 7}) over $[0,u]$ in order to derive the moments of $\max(\Theta)$.

\begin{lemma}
Suppose $\Theta\sim\mathcal{U}(\Delta^{n,u})$ and let $\frac{u}{n}< b\leq u$. Then \begin{equation}\label{eq; 7}\PP(\max(\Theta)\leq b) = \frac{1}{u^{n-1}}\sum_{k=0}^{m}\binom{n}{k}(-1)^k((n-k)b-u)^{n-1}\end{equation} where $0\leq m\leq n-2$ is such that $\frac{u}{n-m}<b\leq \frac{u}{n-m-1}$.
\end{lemma}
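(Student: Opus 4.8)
### Proof proposal for Whitworth's formula via dimension reduction

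The plan is to prove (\ref{eq; 7}) by induction on $n$, mirroring the structure of the proof of Lemma \ref{lem 3}: condition on the first coordinate $\Theta_1^{n,u}$ and reduce to an $(n-1)$-simplex of smaller size. To set up the recursion, I would use the standard fact that $\Theta_1^{n,u}$ has density $\frac{(n-1)(u-\theta_1)^{n-2}}{u^{n-1}}$ on $[0,u]$ (this follows from Lemma \ref{lem; uni}, or directly from the stick-breaking description), and that conditioned on $\Theta_1^{n,u}=\theta_1$, the remaining vector $(\Theta_2,\ldots,\Theta_n)$ is distributed as $\mathcal{U}(\Delta^{n-1,u-\theta_1})$. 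Hence
$$\PP(\max(\Theta^{n,u})\leq b)=\int_0^{\min\{b,u\}}\frac{(n-1)(u-\theta_1)^{n-2}}{u^{n-1}}\,\PP\big(\max(\Theta^{n-1,u-\theta_1})\leq b\big)\,d\theta_1,$$
where the upper limit $\min\{b,u\}$ enforces $\Theta_1\leq b$, and for each value of $\theta_1$ one plugs in the inductive formula for the $(n-1)$-simplex of size $u-\theta_1$ (being careful that the threshold index $m$ may change as $u-\theta_1$ varies).

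The base case is $n=2$: here $\max(\Theta^{2,u})=\max\{\Theta_1,u-\Theta_1\}\leq b$ iff $u-b\leq\Theta_1\leq b$, which since $\Theta_1\sim\mathcal{U}[0,u]$ has probability $\frac{2b-u}{u}$, matching the right side of (\ref{eq; 7}) with $m=0$. For the inductive step, I would carry out the integral above by splitting the range of $\theta_1$ according to which "regime" $b$ falls into relative to $u-\theta_1$ — i.e. the value of the effective $m$ for the $(n-1)$-simplex of size $u-\theta_1$ is governed by $\frac{u-\theta_1}{(n-1)-m'}<b\leq\frac{u-\theta_1}{(n-2)-m'}$. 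Concretely, as $\theta_1$ ranges over $[0,\min\{b,u\}]$, one partitions into subintervals on which the inductive sum has a fixed number of terms, integrates the polynomial $((n-1-k)b-(u-\theta_1))^{n-2}(u-\theta_1)^{n-2}$... wait, more simply: each summand is $(u-\theta_1)^{n-2}\cdot\frac{((n-1-k)b-(u-\theta_1))^{n-2}}{(u-\theta_1)^{n-2}}=((n-1-k)b-(u-\theta_1))^{n-2}$, so the integrand is just $(n-1)\binom{n-1}{k}(-1)^k((n-1-k)b-(u-\theta_1))^{n-2}/u^{n-1}$, which integrates cleanly in $\theta_1$ to a difference of $(n-1)$-th powers.

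After integration, the substitution $s=u-\theta_1$ turns each term into $\frac{1}{u^{n-1}}\binom{n-1}{k}(-1)^k\big[((n-k)b-u)^{n-1}-((n-1-k)b)^{n-1}\big]$ coming from the "interior" part, plus boundary contributions at $\theta_1=0$ and at the breakpoints where $m$ jumps. The key algebraic step is then to reindex and telescope: the terms $((n-1-k)b)^{n-1}$ from the $k$-th summand must cancel against contributions from the breakpoints, and the Pascal identity $\binom{n-1}{k}+\binom{n-1}{k-1}=\binom{n}{k}$ should assemble the surviving $((n-k)b-u)^{n-1}$ terms into $\sum_{k=0}^m\binom{n}{k}(-1)^k((n-k)b-u)^{n-1}$. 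I expect the main obstacle to be precisely this bookkeeping: tracking how the upper summation index $m$ for the $(n-1)$-dimensional formula depends on $\theta_1$, handling the breakpoints $\theta_1=u-(n-1-j)b$ where a new term switches on, and verifying that all the spurious powers of $b$ cancel so that the final sum has exactly the claimed range $0\leq k\leq m$ with $\frac{u}{n-m}<b\leq\frac{u}{n-m-1}$. An alternative that sidesteps some of this is to prove the formula first for $b>\frac{u}{n-1}$ (where $m=0$ and only the one-term case of the inductive hypothesis is needed over the whole range, though $\theta_1$ near $0$ can still force $u-\theta_1$ close to $(n-1)b$), and then bootstrap; but I would still need the inclusion–exclusion structure, so the clean approach is to carry the general inductive formula through and do the telescoping carefully.
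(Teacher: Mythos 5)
Your proposal is essentially the paper's own proof: induction on the dimension by conditioning on the first coordinate (whose marginal density is $(n-1)(u-\theta_1)^{n-2}/u^{n-1}$, with the remaining coordinates uniform on $\Delta^{n-1,u-\theta_1}$), splitting the $\theta_1$-range at the breakpoints where the effective index $m$ of the inductive formula changes, integrating the resulting polynomials, and assembling the surviving terms via Pascal's identity $\binom{n-1}{k}+\binom{n-1}{k-1}=\binom{n}{k}$. The bookkeeping you flag as the main obstacle is exactly what the paper carries out, in three cases ($m=0$, $1\leq m\leq n-2$, and $m=n-1$, the last using that the conditional probability is $1$ when $u-\theta_1\leq b$), and your sketch correctly anticipates its structure.
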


\begin{proof}
We proceed by induction. For $n=2$, necessarily, $m=0$. Then $$\PP(\max(\Theta)\leq b) = \frac{1}{u}\int_{u-b}^bd\theta_1 = \frac{2b-u}{u}.$$ Assume now that the claim holds for $n$ and let $0\leq m\leq n-1$ such that $\frac{u}{n+1-m}<b\leq\frac{u}{n+1-m-1}$. Before we proceed, let us observe the following: Let $(\theta_1,\ldots,\theta_{n+1})\in\Delta^{n+1,u}$ such that $\theta_1\leq b$. Then $$\max_{1\leq i\leq n+1}\{\theta_i\}\leq b\Longrightarrow \theta_1 \geq u-bn.$$ We distinguish between two cases: Suppose $m=0$, i.e., $\frac{u}{n+1}<b\leq\frac{u}{n}$. Equivalently, $0\leq u-nb<b$. Now, let $u-nb< \theta_1\leq b$. Then $\frac{u-\theta_1}{n}<b$. Additionally, $b\leq \frac{u-\theta_1}{n-1}\iff \theta_1\leq u-(n-1)b$ and we have that $b\leq u-(n-1)b$ since $nb\leq u$. We have just shown that $$u-nb<\theta_1\leq b\Longrightarrow \frac{u-\theta_1}{n}<b\leq\frac{u-\theta_1}{n-1}.$$
Thus, \begin{align}
\PP(\max(\Theta^{n+1,u})\leq b) = & \frac{n}{u^n}\int_{u-nb}^b (u-\theta_1)^{n-1}\PP(\max(\Theta^{n,u-\theta_1})\leq b) d\theta_1\nonumber\\=& \frac{n}{u^n}\int_{u-nb}^b (nb-u+\theta_1)^{n-1} d\theta_1\nonumber\\ = &\frac{1}{u^{n}}((n+1)b-u)^{n}\nonumber.
\end{align}
Suppose now $1\leq m\leq n-2$. As in the previous case, it can be shown that $$u-nb\leq u-(n+1-m)b<0\leq u-(n-m)b<b,$$ $$0\leq\theta_1\leq u-(n-m)b\Longrightarrow\frac{u-\theta_{1}}{n-(m-1)}<b\leq\frac{u-\theta_{1}}{n-(m-1)-1}\text{ and}$$ $$u-(n-m)b<\theta_1\leq b \Longrightarrow\frac{u-\theta_{1}}{n-m}<b\leq\frac{u-\theta_{1}}{n-m-1}.$$ Thus,
\begin{align}
\PP(\max(\Theta^{n+1,u})\leq b) =&  \frac{n}{u^n}\Bigg(\int_{0}^{u-(n-m)b} (u-\theta_1)^{n-1}\PP(\max(\Theta^{n,u-\theta_1})\leq b) d\theta_1+\nonumber\\ &\hspace{100pt}\int_{u-(n-m)b}^b (u-\theta_1)^{n-1}\PP(\max(\Theta^{n,u-\theta_1})\leq b) d\theta_1\Bigg)\nonumber\\ =& \frac{n}{u^n}\Bigg(\int_{0}^{u-(n-m)b} \sum_{k=0}^{m-1}\binom{n}{k}(-1)^k((n-k)b-u+\theta_1)^{n-1} d\theta_1+\nonumber\\ &\hspace{100pt} \int_{u-(n-m)b}^b \sum_{k=0}^{m}\binom{n}{k}(-1)^k((n-k)b-u+\theta_1)^{n-1}  d\theta_1\Bigg)\nonumber\\ =& \frac{1}{u^n}\Bigg( \sum_{k=0}^{m-1}\binom{n}{k}(-1)^k\left(((m-k)b)^n-((n-k)b-u)^n\right) +\nonumber\\ &\hspace{100pt}  \sum_{k=0}^{m}\binom{n}{k}(-1)^k\left(((n+1-k)b-u)^n-((m-k)b)^n\right) \Bigg)\nonumber\\ =& \frac{1}{u^n}\Bigg(((n+1)b-u)^n+ \sum_{k=1}^{m}\left(\binom{n}{k}+\binom{n}{k-1}\right)(-1)^k((n+1-k)b-u)^n \Bigg)\nonumber\\ =& \frac{1}{u^n} \sum_{k=0}^{m}\binom{n+1}{k}(-1)^k((n+1-k)b-u)^n. \nonumber
\end{align}

Finally, assume $m=n-1$. This case is similar to the previous one with the exception that for $u-b\leq \theta_1\leq b$ it holds $u-\theta_1\leq b$ and therefore $\PP(\max(\Theta^{n,u-\theta_1})\leq b)=1$. We leave out the details.
\end{proof}

Before we derive the moments of $\max(\Theta)$ several preparations are necessary. The first is the a special case of a difference between incomplete beta functions (cf. \cite[6.6.1]{abramowitz+stegun}) that may be proved easily by induction:

\begin{lemma}
Let $a,b>0$. Then for every $p,q\in\N\cup\{0\}$ it holds $$\int_{\frac{u}{b}}^{\frac{u}{a}}x^{p}(bx-u)^{q}dx=\left(\frac{u}{a}\right)^{p+q+1}\sum_{k=1}^{p+1}(-1)^{k-1}\frac{(b-a)^{q+k}\prod_{j=1}^{k-1}(p-j+1)}{b^k\prod_{j=1}^k(q+j)}.$$
\end{lemma}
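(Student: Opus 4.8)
The plan is to induct on $p$, proving the identity simultaneously for all $q\in\N\cup\{0\}$; the inductive step will feed the hypothesis back in with $q$ replaced by $q+1$. Throughout, write $I(p,q)=\int_{u/b}^{u/a}x^{p}(bx-u)^{q}\,dx$. No inequality between $a$ and $b$ is needed: the whole argument only evaluates elementary antiderivatives, and the signed integral makes sense either way.

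For the base case $p=0$, I would note that an antiderivative of $(bx-u)^{q}$ is $(bx-u)^{q+1}/\bigl(b(q+1)\bigr)$; since $b(u/b)-u=0$ and $b(u/a)-u=u(b-a)/a$, this gives $I(0,q)=\dfrac{u^{q+1}(b-a)^{q+1}}{a^{q+1}\,b(q+1)}$, which is exactly the right-hand side for $p=0$ (the sum collapses to its $k=1$ term, with empty product $\prod_{j=1}^{0}=1$).

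For the inductive step, assume the formula for $p$ and all $q$. I would integrate $I(p+1,q)$ by parts, differentiating $x^{p+1}$ and integrating $(bx-u)^{q}$, to get
\[
I(p+1,q)=\left[x^{p+1}\,\frac{(bx-u)^{q+1}}{b(q+1)}\right]_{u/b}^{u/a}-\frac{p+1}{b(q+1)}\,I(p,q+1).
\]
The bracket vanishes at the lower endpoint and equals $\bigl(\tfrac{u}{a}\bigr)^{p+q+2}\tfrac{(b-a)^{q+1}}{b(q+1)}$ at the upper endpoint. Substituting the inductive hypothesis for $I(p,q+1)$, all that remains is bookkeeping with the two product identities
\[
(q+1)\prod_{j=1}^{k}(q+1+j)=\prod_{j=1}^{k+1}(q+j),\qquad (p+1)\prod_{j=1}^{k-1}(p-j+1)=\prod_{j=1}^{k}\bigl((p+1)-j+1\bigr),
\]
which turn $-\tfrac{p+1}{b(q+1)}I(p,q+1)$ into
\[
-\Bigl(\tfrac{u}{a}\Bigr)^{p+q+2}\sum_{k=1}^{p+1}(-1)^{k-1}\frac{(b-a)^{q+k+1}\,\prod_{j=1}^{k}\bigl((p+1)-j+1\bigr)}{b^{\,k+1}\,\prod_{j=1}^{k+1}(q+j)}.
\]
Shifting the summation index so that it runs over $k=2,\dots,p+2$ and absorbing the overall minus sign via $-(-1)^{k-2}=(-1)^{k-1}$, this becomes $\bigl(\tfrac{u}{a}\bigr)^{p+q+2}\sum_{k=2}^{p+2}(-1)^{k-1}\frac{(b-a)^{q+k}\prod_{j=1}^{k-1}((p+1)-j+1)}{b^{k}\prod_{j=1}^{k}(q+j)}$, and the leftover boundary term is precisely the missing $k=1$ summand of this same sum. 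Adding the pieces gives
\[
I(p+1,q)=\Bigl(\tfrac{u}{a}\Bigr)^{p+q+2}\sum_{k=1}^{p+2}(-1)^{k-1}\frac{(b-a)^{q+k}\,\prod_{j=1}^{k-1}\bigl((p+1)-j+1\bigr)}{b^{k}\,\prod_{j=1}^{k}(q+j)},
\]
which is the asserted identity with $p$ replaced by $p+1$, closing the induction.

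The only real obstacle is keeping the two product identities and the index shift straight; there is no analytic content beyond one integration by parts. The places I would double-check are the parity of the sign after reindexing and the empty-product and empty-sum conventions at $k=1$, which are where a slip is most likely.
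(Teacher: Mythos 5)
Your proof is correct, and it follows exactly the route the paper alludes to but does not spell out (the lemma is stated there as something that ``may be proved easily by induction''): induction via one integration by parts, with the base case $p=0$ and the boundary term supplying the $k=1$ summand. The bookkeeping with the index shift, the sign, and the two product identities all checks out, so your argument is a complete filling-in of the omitted proof rather than a genuinely different method.
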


Next, we define a function that appears in the formula for the moments and prove some of its basic properties: 

\begin{lemma}\label{lem; 53}
Let $n\in\N$ and $t\in\N\cup\{0\}$. Denote
$$f(n,t)=\sum_{k=1}^{n}(-1)^{k-1}\binom{n}{k}\frac{1}{k^{t}}.$$
Then $$f(n,t)=\begin{cases}
     1 & \textnormal{if } n = 1\textnormal{ or } t=0\\
      \sum_{s=1}^n\frac{1}{s}f(s,t-1) & \textnormal{otherwise.} \\
\end{cases}$$ In particular, $$f(n,1)=\sum_{s=1}^n\frac{1}{s} \;\;\;\;\textnormal{ and } \;\;\;\;f(n,2) = \frac{1}{2}\left(\left(\sum_{s=1}^{n}\frac{1}{s}\right)^{2}+\sum_{s=1}^{n}\frac{1}{s^{2}}\right).$$
\end{lemma}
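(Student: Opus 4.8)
The plan is to prove the recursion by a combinatorial/algebraic identity for the alternating binomial sums $f(n,t)$, and then to read off the two special cases $f(n,1)$ and $f(n,2)$ by direct summation. The base cases are immediate: if $t=0$ then $f(n,0)=\sum_{k=1}^n(-1)^{k-1}\binom{n}{k}=1$ (the standard alternating binomial identity, valid for $n\geq 1$), and if $n=1$ then the sum has the single term $k=1$, giving $1$. So the content is the recursion $f(n,t)=\sum_{s=1}^n\frac1s f(s,t-1)$ for $n\geq 2$, $t\geq 1$.

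For the recursive step I would start from the key classical identity $\frac1k f(n,t-1)$-style manipulation: the cleanest route is to use that $\frac{1}{k}\binom{n}{k}=\frac{1}{n}\binom{n-1}{k-1}+\frac{1}{k}\binom{n-1}{k}$ — or, more directly, to exploit the integral representation $\frac{1}{k^t}=\frac{1}{(t-1)!}\int_0^1 (-\log x)^{t-1}x^{k-1}\,dx$ together with $\sum_{k=1}^n(-1)^{k-1}\binom nk x^{k-1}=\frac{1-(1-x)^n}{x}$. Then $f(n,t)=\frac{1}{(t-1)!}\int_0^1(-\log x)^{t-1}\frac{1-(1-x)^n}{x}\,dx$, and the telescoping identity $\frac{1-(1-x)^n}{x}=\sum_{s=1}^n(1-x)^{s-1}$ turns this into $\sum_{s=1}^n\frac{1}{(t-1)!}\int_0^1(-\log x)^{t-1}(1-x)^{s-1}\,dx$; inside each summand, writing $(1-x)^{s-1}=\sum_{j}\binom{s-1}{j}(-x)^j$ and integrating back recovers $\sum_{s=1}^n\frac1s\cdot\frac1s f(?,\cdot)$ — I would instead verify more cleanly that $\int_0^1(-\log x)^{t-1}(1-x)^{s-1}dx=\frac{(t-1)!}{s}f(s,t-1)$ by the same expansion applied to a single $s$, which gives the claimed recursion term by term. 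An alternative purely algebraic proof, which the phrase "may be proved easily by induction" in the preceding lemma suggests is the intended style here too, is to induct on $n$: assuming the recursion for $n$, use Pascal's rule $\binom{n+1}{k}=\binom nk+\binom n{k-1}$ to split $f(n+1,t)$, reindex the $\binom n{k-1}$ part, and use $\frac{1}{k^t}$ versus $\frac{1}{(k-1)^t}$ — this is messier because of the $k^t$ denominators, so I would favour the generating-function/integral argument.

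Once the recursion is established, the two special cases follow by unwinding it. For $t=1$: $f(n,1)=\sum_{s=1}^n\frac1s f(s,0)=\sum_{s=1}^n\frac1s$, the harmonic number $H_n$. For $t=2$: $f(n,2)=\sum_{s=1}^n\frac1s f(s,1)=\sum_{s=1}^n\frac{H_s}{s}$, and the standard identity $\sum_{s=1}^n\frac{H_s}{s}=\frac12\bigl(H_n^2+H_n^{(2)}\bigr)$ (where $H_n^{(2)}=\sum_{s=1}^n\frac1{s^2}$) — itself provable by a one-line induction or by symmetrizing the double sum $\sum_{s\le s'}\frac{1}{ss'}$ — gives exactly $\frac12\bigl((\sum_{s=1}^n\frac1s)^2+\sum_{s=1}^n\frac1{s^2}\bigr)$.

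The main obstacle is the recursive step itself: making the passage from the alternating binomial sum with an $s^{-t}$ weight to a sum of harmonic-type weights over $f(s,t-1)$ rigorous without excessive bookkeeping. The integral representation $\frac{1}{k^t}=\frac{1}{(t-1)!}\int_0^1(-\log x)^{t-1}x^{k-1}dx$ combined with the telescoping $\frac{1-(1-x)^n}{x}=\sum_{s=1}^n(1-x)^{s-1}$ is what makes this painless, so I would organize the proof around establishing the single-$s$ evaluation $\int_0^1(-\log x)^{t-1}(1-x)^{s-1}\,dx=\frac{(t-1)!}{s}\,f(s,t-1)$ first, and then summing over $s$.
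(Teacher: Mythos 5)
Your proof is correct, but it takes a genuinely different route from the paper's. The paper proves the recursion in three lines of pure algebra: Pascal's rule splits $\binom{n}{k}=\binom{n-1}{k}+\binom{n-1}{k-1}$, and the absorption identity $\frac{1}{k}\binom{n-1}{k-1}=\frac{1}{n}\binom{n}{k}$ turns the second piece into $\frac{1}{n}f(n,t-1)$, giving $f(n,t)=f(n-1,t)+\frac{1}{n}f(n,t-1)$, after which induction on $n$ yields $f(n,t)=\sum_{s=1}^n\frac{1}{s}f(s,t-1)$. So your worry that the algebraic induction is ``messier because of the $k^t$ denominators'' is unfounded: absorption removes exactly one power of $k$, and no reindexing of $1/k^t$ against $1/(k-1)^t$ is ever needed. (Relatedly, the identity you quote in passing, $\frac{1}{k}\binom{n}{k}=\frac{1}{n}\binom{n-1}{k-1}+\frac{1}{k}\binom{n-1}{k}$, is misstated; the correct form is $\frac{1}{k}\binom{n}{k}=\frac{1}{k}\binom{n-1}{k}+\frac{1}{n}\binom{n}{k}$ — but nothing in your argument depends on it.) Your chosen route — the representation $\frac{1}{k^t}=\frac{1}{(t-1)!}\int_0^1(-\log x)^{t-1}x^{k-1}\,dx$, the identity $\sum_{k=1}^n(-1)^{k-1}\binom{n}{k}x^{k-1}=\frac{1-(1-x)^n}{x}$, the geometric telescoping $\frac{1-(1-x)^n}{x}=\sum_{s=1}^n(1-x)^{s-1}$, and the single-$s$ evaluation $\int_0^1(-\log x)^{t-1}(1-x)^{s-1}\,dx=\frac{(t-1)!}{s}f(s,t-1)$ — is sound; note, though, that the last evaluation is proved by precisely the same absorption step, applied at a single $s$, so your argument embeds the paper's computation rather than avoiding it. What your approach buys is the integral representation $f(n,t)=\frac{1}{(t-1)!}\int_0^1(-\log x)^{t-1}\frac{1-(1-x)^n}{x}\,dx$, a pleasant byproduct; what the paper's buys is brevity and a purely finite argument. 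The special cases are handled identically in both: unwind the recursion and invoke $\sum_{s=1}^n\frac{1}{s}\sum_{k=1}^s\frac{1}{k}=\frac{1}{2}\bigl(\bigl(\sum_{s=1}^n\frac{1}{s}\bigr)^2+\sum_{s=1}^n\frac{1}{s^2}\bigr)$.
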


\begin{proof}
If $n=1$ or $t=0$ the claim is clear. Suppose $n>1$ and $t>0$. Then
\begin{align}
    f(n,t)=&\sum_{k=1}^{n}(-1)^{k-1}\binom{n}{k}\frac{1}{k^{t}}\nonumber\\=&\sum_{k=1}^{n}(-1)^{k-1}\left(\binom{n-1}{k}+\binom{n-1}{k-1}\right)\frac{1}{k^{t}}\nonumber\\=&f(n-1,t)+\frac{1}{n}f(n,t-1)\nonumber
\end{align} and the claim follows by induction.

Now, the formula for $t=1$ is well known (e.g. \cite[Example 3.7]{andreescu2003path}) but our proof seems to be somewhat simpler and coincides with \cite[Example 3 on p. 4]{riordan1968combinatorial}: $$f(n,1)=\sum_{s=1}^n\frac{1}{s}f(s,0)=\sum_{s=1}^n\frac{1}{s}.$$
Finally, for $t=2$:
$$f(n,2) =  \sum_{s=1}^n\frac{1}{s}f(s,1)=\sum_{s=1}^n\frac{1}{s}\sum_{k=1}^s\frac{1}{k}= \frac{1}{2}\left(\left(\sum_{s=1}^{n}\frac{1}{s}\right)^{2}+\sum_{s=1}^{n}\frac{1}{s^{2}}\right)$$ where the last equality is due to \cite[(6.71)]{graham1989concrete} (or \cite[(3.62)]{alzer2006series}).
\end{proof}

\begin{remark}\label{rem; 33}
\begin{enumerate}
    \item In the literature, the numbers $\sum_{s=1}^n\frac{1}{s}$ are called \emph{harmonic numbers} and are denoted by $H_n, \;n\in\N$ (e.g. \cite[6.54]{graham1989concrete}) and for $t>1$ the partial sums of Riemann's zeta  function $\zeta(t)$ are called \emph{harmonic numbers of order $t$} (e.g. \cite[(6.61)]{graham1989concrete}), or \emph{generalized harmonic numbers} (e.g. \cite[1.1]{choi2011some}),  i.e. $$H_n^{(t)} = \sum_{s=1}^n\frac{1}{s^t}, \;n\in\N.$$ Thus, we may write $$f(n,1)=H_n\;\;\;\;\;\text{ and }\;\;\;\;\; f(n,2)=\frac{1}{2}\left(H_n^2+H_n^{(2)}\right).$$
    \item A natural question that arises in light of the previous remark is whether for $t>2$ it is possible to express $f(n,t)$ in terms of $H_n$ and $H_n^{(s)}, 2\leq s \leq t$. 
\end{enumerate}
\end{remark}

The following lemma enables to  simplify significantly the formula for the moments:

\begin{lemma}\label{lem; LET}
Let $t\in\N$ and let $i\in\{0,1,\ldots, t\}$. Then
\begin{equation}\label{eq; bin}\sum_{l=i+1}^{t+1}(-1)^{t+l-1}\binom{n-1+t}{n+l-2}\binom{n+l-2}{n+i-1}=\begin{cases}0&\textnormal{ if } i<t\\1 & \textnormal{ if } i=t.\end{cases}\end{equation}
\end{lemma}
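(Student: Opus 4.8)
The plan is to recognize the alternating sum in \eqref{eq; bin} as the evaluation of an iterated binomial transform, and to reduce it to the classical identity $\sum_{j}(-1)^j\binom{a}{j}\binom{j}{b}=(-1)^b\pred{a=b}$. First I would reindex the sum to make the combinatorial structure transparent: write $r = l-1$, so $r$ ranges over $\{i,\ldots,t\}$, and the summand becomes $(-1)^{t+r}\binom{n-1+t}{n+r-1}\binom{n+r-1}{n+i-1}$. The key algebraic fact I would invoke is the subset-of-a-subset (trinomial revision) identity
\[
\binom{n-1+t}{n+r-1}\binom{n+r-1}{n+i-1}=\binom{n-1+t}{n+i-1}\binom{t-i}{r-i},
\]
which holds because both sides count the number of ways to pick a chain $S \subseteq T$ inside $[n-1+t]$ with $|T| = n+r-1$ and $|S| = n+i-1$. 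After pulling the factor $\binom{n-1+t}{n+i-1}$, which does not depend on $r$, out of the sum, what remains is $\binom{n-1+t}{n+i-1}\sum_{r=i}^{t}(-1)^{t+r}\binom{t-i}{r-i}$.

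Next I would substitute $s = r-i$, so the inner sum becomes $(-1)^{t-i}\sum_{s=0}^{t-i}(-1)^{s}\binom{t-i}{s} = (-1)^{t-i}\,(1-1)^{t-i}$ by the binomial theorem. This vanishes whenever $t-i>0$, i.e. whenever $i<t$, which handles the first case of \eqref{eq; bin}. When $i=t$, the sum over $l$ in the statement has the single term $l=t+1$, i.e. $r=t$, $s=0$, giving $\binom{n-1+t}{n+t-1}\binom{n+t-1}{n+t-1}\cdot(-1)^{2t} = 1$; equivalently, from the reduced form, $\binom{n-1+t}{n+i-1}\cdot 1 = \binom{n-1+t}{n+t-1} = 1$ since $n+t-1 = (n-1+t)$. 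This matches the second case, completing the argument.

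The only genuine step that requires care is the trinomial-revision identity and the bookkeeping of the index shifts; everything else is the binomial theorem. I would double-check the edge case $i=t$ directly (as above) rather than relying on the general formula, since the sum then degenerates to a single term, and I would verify that the claimed range $l\in\{i+1,\ldots,t+1\}$ of the original statement corresponds exactly to $r\in\{i,\ldots,t\}$ and hence $s\in\{0,\ldots,t-i\}$ after the shifts, so that no boundary term is lost. No deeper obstacle is anticipated: the identity is a finite-difference vanishing statement, and the binomial transform viewpoint makes it essentially automatic.
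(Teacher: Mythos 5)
Your argument is correct: the index shift $r=l-1$ is handled cleanly, the trinomial revision $\binom{n-1+t}{n+r-1}\binom{n+r-1}{n+i-1}=\binom{n-1+t}{n+i-1}\binom{t-i}{r-i}$ is valid here (all binomials have upper index at least the lower), the sign bookkeeping $(-1)^{t+l-1}=(-1)^{t+r}$ is right, and the binomial theorem gives $(1-1)^{t-i}=0$ for $i<t$, with the degenerate case $i=t$ checked directly. The paper proceeds differently in mechanism though it rests on the same underlying fact: it substitutes $m=i+1$, $s=t+1$, $p=n-2$, uses the ratio identity $\binom{p+s}{p+l}\binom{p+l}{p+m}=\frac{(s+1)\cdots(s+p)}{(m+1)\cdots(m+p)}\binom{s}{l}\binom{l}{m}$ to strip the shift $p$ (the constant prefactor being independent of $l$), and then cites the classical orthogonality relation $\sum_{l=m}^{s}(-1)^{s+l}\binom{s}{l}\binom{l}{m}=\pred{m=s}$ from Riordan. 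Your route absorbs the shift via trinomial revision instead of dividing it out, and then proves the resulting vanishing statement from scratch with the binomial theorem, so it is fully self-contained where the paper relies on a cited identity; the paper's reduction, on the other hand, isolates the classical $p=0$ identity explicitly, which makes the connection to the standard literature more visible. Either way the content is the same finite-difference orthogonality of the binomial matrices, and your version would serve as a complete replacement proof.
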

\begin{proof}
Setting $m=i+1, s=t+1$ and $p=n-2$ it is obvious that (\ref{eq; bin}) is equivalent to  \begin{equation}\label{eq; bin1}\sum_{l=m}^{s}(-1)^{s+l}\binom{p+s}{p+l}\binom{p+l}{p+m}=\begin{cases}0&\textnormal{ if } m<s\\1 & \textnormal{ if } m=s\end{cases}\end{equation}
where $s,p\in\N\cup\{0\}$ and $m\in\{0,1,\ldots,s\}$. Since $$\binom{p+s}{p+l}\binom{p+l}{p+m}=\frac{(s+1)\cdots(s+p)}{(m+1)\cdots(m+p)}\binom{s}{l}\binom{l}{m},$$ it suffices to establish (\ref{eq; bin1}) for $p=0$ but this is well known (e.g. \cite[Example 2]{riordan1968combinatorial}).
\end{proof}

As a last preparation, we define a function $F(x,y,n,r)$ and show how it is related to $f(n,i)$. For our purpose only the case $x=1,y=2$ is relevant but in the proof it is advantageous to consider the more general case. Again, a special case of our result is known (e.g. \cite[(6.76)]{graham1989concrete} where it was stated while solving \cite{UNGAR}).

\begin{lemma}
Let $n\in\N, x,y\in\R$ and $r\in\N\cup\{0\}$. Denote $$F(x,y,n,r)=\sum_{k=0}^{n-1}(-1)^{k}\binom{n}{k}\frac{((n-k)x-y)^{n-1+r}}{(n-k)^r}.$$ Then \begin{equation}\label{eq; 14}F(x,y,n,r)=\sum_{i=0}^{r}x^{r-i}y^{n-1+i}\binom{n+r-1}{n+i-1}(-1)^{i}f(n,i).\end{equation} 
\end{lemma}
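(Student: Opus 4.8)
The plan is to prove the identity
\[
F(x,y,n,r)=\sum_{k=0}^{n-1}(-1)^{k}\binom{n}{k}\frac{((n-k)x-y)^{n-1+r}}{(n-k)^r}=\sum_{i=0}^{r}x^{r-i}y^{n-1+i}\binom{n+r-1}{n+i-1}(-1)^{i}f(n,i)
\]
by a double-counting / reindexing argument on the left-hand side. First I would expand $((n-k)x-y)^{n-1+r}$ by the binomial theorem as $\sum_{\ell}\binom{n-1+r}{\ell}((n-k)x)^{\ell}(-y)^{n-1+r-\ell}$, substitute into the definition of $F$, and swap the order of summation. This converts $F$ into $\sum_{\ell} (-y)^{n-1+r-\ell}x^{\ell}\binom{n-1+r}{\ell}(n-k)^{\ell-r}$ summed against $\sum_{k=0}^{n-1}(-1)^k\binom{n}{k}(n-k)^{\ell-r}$. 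The inner $k$-sum is almost $f(n,r-\ell)$ when $\ell<r$ (after substituting $k\mapsto n-k$, which sends $\binom{n}{k}$ to itself and flips the sign by $(-1)^n$), so one must track the range of $\ell$ carefully: the terms $\ell=r, r+1,\dots$ correspond to nonpositive exponents of $(n-k)$, which are exactly the values that make $f(n, \cdot)$ meaningful (with $f(n,0)=1$), while terms with $\ell<r$ should all vanish.

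The cleanest route, I expect, is actually to go the other direction: start from the right-hand side, write $f(n,i)=\sum_{k=1}^{n}(-1)^{k-1}\binom{n}{k}k^{-i}$, substitute $y^{n-1+i}=(y^{n-1+r})/(y^{r-i})$ is not available as $y$ may be zero, so instead keep $y^{n-1+i}$ and combine $x^{r-i}y^{n-1+i}$ with the Vandermonde-type coefficient $\binom{n+r-1}{n+i-1}$. The key auxiliary fact is Lemma \ref{lem; LET}: the alternating sum of products of binomial coefficients $\sum_{\ell}(-1)^{t+\ell-1}\binom{n-1+t}{n+\ell-2}\binom{n+\ell-2}{n+i-1}$ is the Kronecker delta $\delta_{i,t}$. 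This is precisely the orthogonality relation needed to invert the transformation between the $\{f(n,i)\}$ family and the $\{F(x,y,n,r)\}$ family, so the proof should proceed by verifying that the two sides, regarded as polynomials in $x$ and $y$, have the same coefficient of each monomial $x^{r-i}y^{n-1+i}$ — on the right it is manifestly $\binom{n+r-1}{n+i-1}(-1)^i f(n,i)$, and on the left, after binomial expansion and use of Lemma \ref{lem; LET} (or the substitution $k\mapsto n-k$ to identify the $k$-sums with $f(n,\cdot)$ values), it collapses to the same thing.

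Concretely, after expanding the left side and reindexing, I would isolate the coefficient of $x^{\ell}$ (for $0\le\ell\le n-1+r$) and show it equals the coefficient of $x^{\ell}$ on the right by matching $\ell$ with $r-i$ (so $i=r-\ell$, forcing $0\le\ell\le r$ on the right); the terms $\ell>r$ on the left must be shown to cancel — this is where one invokes the classical identity $\sum_{k=0}^{n}(-1)^k\binom{n}{k}P(k)=0$ for any polynomial $P$ of degree $<n$ (here applied to $P(k)=(n-k)^{\ell-r}$ with $0\le \ell-r<n$, i.e. $r<\ell<n+r$... one has to be a little careful at the top, but the range works out because the largest relevant power is $n-1+r-r = n-1 < n$). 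Alternatively, a clean induction on $r$ works: the base case $r=0$ is the standard identity $\sum_{k=0}^{n-1}(-1)^k\binom{n}{k}((n-k)x-y)^{n-1}=\binom{n-1}{n-1}y^{n-1}f(n,0)\cdot$(constant) — really just finite differences of a degree-$(n-1)$ polynomial — and the inductive step uses the recursion $f(n,i)=\sum_{s=1}^n \frac1s f(s,i-1)$ from Lemma \ref{lem; 53} together with Pascal-type manipulations of the coefficients $\binom{n+r-1}{n+i-1}$.

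**The main obstacle** will be the bookkeeping: managing the shifting summation ranges when some exponents of $(n-k)$ become zero or negative, and correctly pairing the finite-difference vanishing (for the "excess" terms $\ell<r$, or $\ell>r$ depending on direction) with the nonvanishing $f(n,\cdot)$ contributions. Lemma \ref{lem; LET} is visibly tailored to exactly this inversion, so once the two families are set up as mutually inverse linear transformations the identity is forced; the real work is writing the binomial expansion so that Lemma \ref{lem; LET}'s hypotheses ($i\in\{0,\ldots,t\}$, the specific index pattern) line up with the sum at hand. I do not anticipate any conceptual difficulty beyond this — no new tools are needed, only the binomial theorem, the two combinatorial lemmas already proved, and finite-difference vanishing of polynomials.
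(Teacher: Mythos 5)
Your main route is correct, and it is genuinely different from the paper's. The paper proves the identity by induction on $r$: the base case $r=0$ is exactly the binomial-expansion-plus-finite-differences computation you describe, and the inductive step replaces $x$ by $t$, integrates the identity over $[0,x]$, and recovers the new term $(-1)^{r+1}f(n,r+1)$ as the boundary contribution at $t=0$, with the binomial coefficients updated by Pascal-type algebra. Your proposal instead does the base-case computation uniformly in $r$: expand $((n-k)x-y)^{n-1+r}$, swap sums, kill every term with a positive exponent of $(n-k)$ by the vanishing of the $n$th finite difference of a polynomial of degree at most $n-1$ (the top exponent is $(n-1+r)-r=n-1$, and for exponent $\geq 1$ the missing $k=n$ term is harmlessly zero), and identify the surviving inner sums, after $k\mapsto n-k$, as $(-1)^{n-1}f(n,r-\ell)$; reindexing $i=r-\ell$ and using $\binom{n-1+r}{r-i}=\binom{n+r-1}{n+i-1}$ gives the right-hand side in one pass. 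This is arguably shorter than the paper's argument and makes the provenance of the coefficient $\binom{n+r-1}{n+i-1}$ and the sign $(-1)^i$ transparent, at the cost of slightly more careful bookkeeping at the single exponent-zero term $\ell=r$, where the $k=n$ term cannot be added for free and the sum instead yields the $f(n,0)=1$ contribution.

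Two slips to fix in the write-up, neither fatal. First, your opening paragraph has the ranges inverted: the terms with $\ell>r$ (positive exponents of $n-k$) are the ones that vanish, while the terms with $\ell\leq r$ survive and produce $f(n,r-\ell)$; your third paragraph states this correctly, so just reconcile the two. Second, Lemma \ref{lem; LET} is not the right tool here and is not needed: in the paper it serves only in the proof of Theorem \ref{thm; 5.7} to collapse the double sum after this lemma has been applied, and your direct expansion never requires any such inversion — dropping that detour (and the vague alternative induction via the recursion of Lemma \ref{lem; 53}, which is not how the inductive step actually closes) leaves a clean, self-contained argument using only the binomial theorem and Euler's finite-difference identity.
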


\begin{proof}
We proceed by induction on $r$. Let $r=0$. Then
 \begin{align}
 F(x,y,n,0)=&\sum_{k=0}^{n-1}(-1)^{k}\binom{n}{k}((n-k)x-y)^{n-1}\nonumber\\=&\sum_{k=0}^{n-1}(-1)^{k}\binom{n}{k}\sum_{l=0}^{n-1}\binom{n-1}{l}(n-k)^lx^l(-y)^{n-1-l}\nonumber\\ =&\sum_{l=0}^{n-1}\left(\binom{n-1}{l}(-y)^{n-1-l}\sum_{k=1}^n(-1)^{n-k}\binom{n}{k}k^l\right)x^l\nonumber\\ = &(-y)^{n-1}\sum_{k=1}^n(-1)^{n-k}\binom{n}{k}=y^{n-1}\nonumber.
\end{align} where in the fourth equality we used that for $1\leq l\leq n-1$, by Euler's formula for $n$th differences of powers (e.g. \cite[(2.1)]{doi:10.1080/00029890.1978.11994613}), $$\sum_{k=1}^n(-1)^{n-k}\binom{n}{k}k^l=0.$$ Now, the right hand side of (\ref{eq; 14}) equals 
$x^0y^{n-1}\binom{n-1}{n-1}(-1)^0f(n,0)= y^{n-1}$.

Assume now that (\ref{eq; 14}) holds for $r$. We shall prove that it holds for $r+1$. Replace all occurrences of $x$ in (\ref{eq; 14}) with $t$ and integrate over $[0,x]$. We obtain $$\int_0^xF(t,y,n,r)dt=\sum_{i=0}^{r}y^{n-1+i}\binom{n+r-1}{n+i-1}(-1)^{i}f(n,i)\int_0^xt^{r-i}dt\Longrightarrow$$
$$\int_0^x\sum_{k=0}^{n-1}(-1)^{k}\binom{n}{k}\frac{((n-k)t-y)^{n-1+r}}{(n-k)^r}dt=\sum_{i=0}^{r}y^{n-1+i}\binom{n+r-1}{n+i-1}(-1)^{i}f(n,i)\int_0^xt^{r-i}dt\Longrightarrow$$
\begin{align}
\sum_{k=0}^{n-1}(-1)^{k}\binom{n}{k}\frac{((n-k)x-y)^{n+r}}{(n-k)^{r+1}(n+r)}-&\sum_{k=0}^{n-1}(-1)^{k}\binom{n}{k}\frac{(-y)^{n+r}}{(n-k)^{r+1}(n+r)}\nonumber\\=&\sum_{i=0}^{r}x^{r-i+1}y^{n-1+i}\frac{1}{r-i+1}\binom{n+r-1}{n+i-1}(-1)^{i}f(n,i)\Longrightarrow\nonumber\\F(x,y,n,r+1)=&y^{n+r}\sum_{k=0}^{n-1}(-1)^{k+n+r}\binom{n}{k}\frac{1}{(n-k)^{r+1}}\nonumber\\+&\sum_{i=0}^{r}x^{r-i+1}y^{n-1+i}\frac{n+r}{r-i+1}\binom{n+r-1}{n+i-1}(-1)^{i}f(n,i)\Longrightarrow\nonumber\\F(x,y,n,r+1)=&y^{n+r}(-1)^{r+1}\sum_{k=1}^{n}(-1)^{k-1}\binom{n}{k}\frac{1}{k^{r+1}}\nonumber\\+&\sum_{i=0}^{r}x^{r-i+1}y^{n-1+i}\binom{n+r}{n+i-1}(-1)^{i}f(n,i)\Longrightarrow\nonumber\\F(x,y,n,r+1)=&\sum_{i=0}^{r+1}x^{r-i+1}y^{n-1+i}\binom{n+r}{n+i-1}(-1)^{i}f(n,i).\nonumber\end{align}
\end{proof}

Two methods for calculating the expectation are given in \cite{onn2011generating}. Our proof is similar to their geometric proof in that $\Delta^{n,u}$ is divided into portions determined by the index of the maximal coordinate. 

\begin{theorem}\label{thm; 5.7}
Let $\Theta\sim\mathcal{U}(\Delta^{n,u})$ and let $t\in\N$. Then $$\E{\max(\Theta)^t}=\frac{u^t}{\binom{n-1+t}{t}}f(n,t).$$ In particular, $$\E{\max(\Theta)} = \frac{u}{n}\sum_{k=1}^{n}\frac{1}{k}\;\;\;\textnormal{ and }\;\;\;\textnormal{Var}(\max(\Theta))=\frac{u^{2}}{n^{2}(n+1)}\left(n\sum_{k=1}^{n}\frac{1}{k^{2}}-\left(\sum_{k=1}^{n}\frac{1}{k}\right)^{2}\right).$$
\end{theorem}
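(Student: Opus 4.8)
The plan is to start from Whitworth's formula~(\ref{eq; 7}) and integrate, in the spirit of \cite{onn2011generating}'s geometric argument in which $\Delta^{n,u}$ is cut up according to which coordinate is largest. Since $\max(\Theta)$ is absolutely continuous and supported on $[u/n,u]$, I would first differentiate~(\ref{eq; 7}) piecewise in its argument to obtain the density $f_{\max}$ of $\max(\Theta)$, and then write $\E{\max(\Theta)^t}=\int_{u/n}^{u}v^t f_{\max}(v)\,dv$. On the piece $(u/(n-m),u/(n-m-1)]$ differentiation gives $f_{\max}(v)=\frac{n-1}{u^{n-1}}\sum_{k=0}^m\binom{n}{k}(-1)^k(n-k)\big((n-k)v-u\big)^{n-2}$. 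Substituting this and interchanging the (finite) sums — a fixed $k$ occurs in every piece with $m\geq k$, and the union of those pieces is the single interval $(u/(n-k),u]$ — reduces the moment to $\frac{n-1}{u^{n-1}}\sum_{k=0}^{n-2}\binom{n}{k}(-1)^k(n-k)\int_{u/(n-k)}^{u}v^t\big((n-k)v-u\big)^{n-2}\,dv$.

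Next I would evaluate the remaining one-variable integral with the incomplete-beta-difference lemma, taking its parameters to be $n-k$, $1$, $t$, $n-2$ (playing the roles of $b,a,p,q$ there). After interchanging sums again, the resulting inner sum over $k$ has exactly the shape of a value $F(1,1,n,r)$ of the function introduced in the $F$-lemma (the $k=n-1$ term dropping out since the relevant exponent is positive), so I can invoke the closed form~(\ref{eq; 14}) to re-express it through the numbers $f(n,i)$. At this stage $\E{\max(\Theta)^t}$ is a triple sum: over $k$ (already collapsed into values of $F$), over the index $l$ produced by the beta lemma (which runs up to $t+1$), and over the index $i$ coming from~(\ref{eq; 14}).

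The final step is to swap the $l$- and $i$-summations and simplify the factorial prefactors: they package exactly into $\frac{1}{n-1}\cdot\frac{1}{\binom{n-1+t}{t}}\binom{n-1+t}{n+l-2}\binom{n+l-2}{n+i-1}$, so the inner sum over $l$ is precisely the quantity computed in Lemma~\ref{lem; LET}, which vanishes for every $i<t$ and equals $(-1)^t$ when $i=t$. Thus only the $i=t$ term survives — the two factors $(-1)^t$ cancelling — and what remains is $\E{\max(\Theta)^t}=\frac{u^t}{\binom{n-1+t}{t}}f(n,t)$. The ``in particular'' statements are then immediate specializations: for $t=1$ one has $\binom{n}{1}=n$ and $f(n,1)=H_n=\sum_{k=1}^n\frac{1}{k}$ by Lemma~\ref{lem; 53}, and for $t=2$ one has $\binom{n+1}{2}=\frac{n(n+1)}{2}$ and $f(n,2)=\frac{1}{2}\big(H_n^2+H_n^{(2)}\big)$, after which a one-line simplification of $\E{\max(\Theta)^2}-\E{\max(\Theta)}^2$ gives the stated variance.

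The main obstacle I anticipate is the bookkeeping of these nested sums rather than any single hard estimate: the argument works only because summing over $k$ genuinely reproduces a value of $F$ (which needs the pieces $(u/(n-m),u/(n-m-1)]$ to telescope correctly when the order of summation is changed), and because the terminal collapse is controlled precisely by the binomial identity of Lemma~\ref{lem; LET}. A minor technical point is to make sure that $\PP(\max(\Theta)\leq b)$ really is the integral of the piecewise-polynomial density obtained by differentiation, with no mass concentrated at the breakpoints $b=u/(n-m)$; this holds because $\max(\Theta)$ is absolutely continuous. (Incidentally, a much shorter proof is available through the representation $\Theta_i=Y_i/Z$ with the $Y_i$ i.i.d.\ exponential and $Z=\sum_i Y_i$: since $(\Theta_1,\ldots,\Theta_n)$ is independent of $Z$, one obtains $\E{\max(\Theta)^t}=\E{(\max_i Y_i)^t}\big/\E{Z^t}$ at once; but this bypasses the dimension-reduction method the paper is organized around.)
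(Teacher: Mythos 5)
Your proposal is correct, and it arrives at the formula through the same computational chain as the paper --- Whitworth's formula (\ref{eq; 7}), the incomplete-beta difference lemma, the closed form (\ref{eq; 14}) for $F$, and the terminal collapse via Lemma \ref{lem; LET} --- but the way you set up the moment integral is genuinely different. The paper partitions $\Delta^{n,u}$ into the regions $A_i$ on which coordinate $i$ is maximal, reduces to $A_1$ by symmetry, and writes $\E{\max(\Theta)^t}$ as a one-dimensional integral of $\theta_1^t(u-\theta_1)^{n-2}\,\PP\left(\max(\Theta^{n-1,u-\theta_1})\leq\theta_1\right)$, i.e.\ it invokes Whitworth one dimension down; the resulting integrals run from $u/(n-k)$ to $u/2$, produce $F(1,2,n,l-1)$, and carry powers of $2$ that only cancel at the end. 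You instead differentiate (\ref{eq; 7}) piecewise in dimension $n$ itself to get the density of $\max(\Theta)$ and integrate $v^t$ against it; after swapping the $m$- and $k$-sums the integrals run from $u/(n-k)$ to $u$ and produce $F(1,1,n,l-1)$ (with the $k=n-1$ term dropping out, as you note), so your constants are marginally cleaner, at the price of bypassing the dimension-reduction step the paper is structured around. I checked the prefactor identity (the product $\prod_{j=1}^{l-1}(t-j+1)\big/\prod_{j=1}^{l}(n-2+j)$ times the outer $n-1$ does equal $\binom{n-1+t}{n+l-2}\big/\binom{n-1+t}{t}$), the sign bookkeeping against Lemma \ref{lem; LET}, and the $t=1,2$ specializations via Lemma \ref{lem; 53}; all are sound. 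The one point you should make explicit rather than assume is why termwise differentiation of (\ref{eq; 7}) yields the density: the right-hand side of (\ref{eq; 7}) is polynomial on each piece and continuous across the breakpoints $b=u/(n-m)$ (the term entering at index $m$ vanishes there), so the distribution function of $\max(\Theta)$ is absolutely continuous and the fundamental theorem of calculus applies; citing this is cleaner than asserting absolute continuity outright. Your closing aside via the representation $\Theta_i=Y_i/Z$ with i.i.d.\ exponentials is also correct and gives the shortest proof of the moment formula, but, as you say, it is essentially the probabilistic route of \cite{onn2011generating} and sidesteps the method the paper develops.
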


\begin{proof}
For $1\leq i\leq n$ denote $$A_i = \{(\theta_1,\ldots,\theta_n)\in \Delta^{n,u}\;|\;\theta_i = \max\{\theta_1,\ldots,\theta_n\}\}.$$ Since the intersection of any two of the $A_i$'s is a zero set in $\R^{n-1}$, by the inclusion-exclusion principle, we have
\begin{align}\E{\max(\Theta)^t} = &  \frac{1}{\textnormal{Vol}(\Delta^{n,u})}\int_{\Delta^{n,u}}\max_{1\leq i\leq n}\{\theta_i^t\}dV\nonumber\\ = & \frac{1}{\textnormal{Vol}(\Delta^{n,u})}\sum_{i=1}^n\int_{A_i}\theta_i^tdV\nonumber\\=&\frac{n}{\textnormal{Vol}(\Delta^{n,u})}\int_{A_1}\theta_1^tdV\nonumber\\=&\frac{n(n-1)}{u^{n-1}}\int_{u/n}^u\theta_1^t  (u-\theta_1)^{n-2}\PP(\max(\Theta^{n-1,u-\theta_1})\leq\theta_1)d\theta_1\label{eq; 132}
\end{align}
where the third equality is due to symmetry. Notice that for $0\leq m\leq n-3$ it holds $$\frac{u}{n-m}\leq\theta_{1}\leq\frac{u}{n-m-1}\iff \frac{u-\theta_{1}}{n-1-m}\leq\theta_{1}\leq\frac{u-\theta_{1}}{n-1-m-1}$$ and for $m=n-2$: $$\frac{u}{2}\leq\theta_1\leq u\Longrightarrow \theta_1\geq u-\theta_1.$$ It follows that 
\begin{align}
    (\ref{eq; 132}) = & \frac{n(n-1)}{u^{n-1}}\Bigg(
    \sum_{m=0}^{n-3}\int_{\frac{u}{n-m}}^{\frac{u}{n-m-1}}\theta_1^t  \sum_{k=0}^{m}\binom{n-1}{k}(-1)^k((n-1-k)\theta_1-u+\theta_1)^{n-2}d\theta_1 + \nonumber \\ &\hspace{200pt}\int_{\frac{u}{2}}^{u}\theta_1^t(u-\theta_1)^{n-2}  d\theta_1\Bigg)\nonumber\\= &\frac{n(n-1)}{u^{n-1}}\Bigg(
    \sum_{k=0}^{n-3}\sum_{m=k}^{n-3}\binom{n-1}{k}(-1)^k\int_{\frac{u}{n-m}}^{\frac{u}{n-m-1}}\theta_1^t  ((n-k)\theta_1-u)^{n-2}d\theta_1 + \int_{\frac{u}{2}}^{u}\theta_1^t(u-\theta_1)^{n-2}  d\theta_1\Bigg)\nonumber\\= &\frac{n(n-1)}{u^{n-1}}\Bigg(
    \sum_{k=0}^{n-3}\binom{n-1}{k}(-1)^k\int_{\frac{u}{n-k}}^{\frac{u}{2}}\theta_1^t ((n-k)\theta_1-u)^{n-2}d\theta_1 +\int_{\frac{u}{2}}^{u}\theta_1^t(u-\theta_1)^{n-2}  d\theta_1\Bigg)\nonumber
    \\=&\frac{n(n-1)}{u^{n-1}}\sum_{k=0}^{n-1}\binom{n-1}{k}(-1)^k\int_{\frac{u}{n-k}}^{\frac{u}{2}}\theta_1^t  ((n-k)\theta_1-u)^{n-2}d\theta_1 \nonumber\\=&\frac{n(n-1)}{u^{n-1}}\sum_{k=0}^{n-1}\binom{n-1}{k}(-1)^k\frac{u^{t+n-1}}{2^{t+n-1}}\sum_{l=1}^{t+1}(-1)^{l-1}\frac{(n-k-2)^{n+l-2}\prod_{j=1}^{l-1}(t-j+1)}{(n-k)^{l}\prod_{j=1}^{l}(n+j-2)} \nonumber\\=&\frac{u^t}{2^{t+n-1}}\sum_{l=1}^{t+1}(-1)^{l-1}\frac{\prod_{j=1}^{l-1}(t-j+1)}{\prod_{j=2}^{l}(n+j-2)}\sum_{k=0}^{n-1}\binom{n}{k}(-1)^k\frac{(n-k-2)^{n+l-2}}{(n-k)^{l-1}}\nonumber\\=&\frac{u^t}{2^{t+n-1}}\sum_{l=1}^{t+1}(-1)^{l-1}\frac{\prod_{j=1}^{l-1}(t-j+1)}{\prod_{j=1}^{l-1}(n+j-1)}F(1,2,n,l-1)\nonumber\\=&\frac{u^t}{2^{t+n-1}}\sum_{l=1}^{t+1}(-1)^{l-1}\frac{\prod_{j=1}^{l-1}(t-j+1)}{\prod_{j=1}^{l-1}(n+j-1)}\sum_{i=0}^{l-1}2^{n-1+i}\binom{n+l-2}{n+i-1}(-1)^{i}f(n,i)\nonumber\\=&\frac{u^{t}}{2^{t}}\binom{n-1+t}{t}^{-1}\sum_{i=0}^{t}(-1)^{i}2^{i}f(n,i)\sum_{l=i+1}^{t+1}(-1)^{l-1}\binom{n-1+t}{n+l-2}\binom{n+l-2}{n+i-1}\nonumber\\=&\frac{u^t}{\binom{n-1+t}{t}}f(n,t).\nonumber
\end{align} Taking $t=1,2$, it follows from Lemma \ref{lem; 53} that $$\E{\max(\Theta)}=\frac{u}{n}\sum_{k=1}^{n}\frac{1}{k}$$ and $$\E{\max(\Theta)^2}=\frac{u^{2}}{n(n+1)}\left(\left(\sum_{k=1}^{n}\frac{1}{k}\right)^{2}+\sum_{k=1}^{n}\frac{1}{k^{2}}\right).$$ Thus, $$\textnormal{Var}(\max(\Theta))=\frac{u^{2}}{n^{2}(n+1)}\left(n\sum_{k=1}^{n}\frac{1}{k^{2}}-\left(\sum_{k=1}^{n}\frac{1}{k}\right)^{2}\right).$$
\end{proof}

\section{Randomness testing}\label{sec; app}

In this section we propose a possible application of the restrictiveness results that is inspired by the works of \cite{onn2011generating} and \cite{weissman2011testing}.

Suppose we wish to test the validity of a certain random number generator that generates numbers in $[0,1]$. That is, we want to know whether the numbers in a given sequence of real numbers are iid as $\mathcal{U}([0,1])$ (the null hypothesis). It is well known (e.g. \cite[p. 71]{gentle2003random}) that there cannot be a uniformly most powerful test and that different tests detect different deviations from the null hypothesis (\cite[pp. 4-5]{l2007testu01}). The following procedure was suggested by \cite{onn2011generating} as an additional test: Let $n,N\in\N$. Given a sequence of $nN$ real numbers in $[0,1]$, partition the numbers into $N$ groups, each consisting of $n$ numbers, and obtain $x_1,\ldots,x_N\in\Delta^{n+1}$ (cf. \cite[Algorithm 2.5.3]{rubinstein2016simulation}). Now compare the sample mean of $\max(x_1),\ldots,\max(x_N)$ with $\frac{1}{n+1}\sum_{k=1}^{n+1}\frac{1}{k}$ (cf. (\ref{eq; ex})). The authors noted  that it \say{seems that this diagnostic tool is quite sensitive to deviations from the iid uniform model} (\cite[p. 4]{onn2011generating}) and demonstrated this on a sequence of numbers that was generated by an autoregressive model. This model is defined as follows: Let $\alpha\in(0,1)$ and let $\left(U_i\right)_{i\in\N}$ be a sequence of iid random variables distributed as $\mathcal{U}([0,1])$. Construct a sequence of random variables $\left(X_i\right)_{i\in\N}$ by setting $X_1\sim\mathcal{U}([0,1])$ and for $i\in\N$ define $$X_{i+1} = \alpha X_i+ (1-\alpha)U_i.$$ 

Clearly, the procedure above is not limited to the maximal coordinate. Let $\preceq$ be any stochastic order on $\Delta^n$ and denote $$p_0=\PP(\Theta\preceq\Theta')$$ where $\Theta,\Theta'\sim\mathcal{U}(\Delta^{n})$. Thus, $\preceq$ gives rise to a random variable $C\sim\text{Bernoulli}(p_0)$ such that for $x,x'\in\Delta^n$ it holds $$C(x,x')=\begin{cases} 1 &\text{if } x\preceq x'\\ 0&\text{otherwise}. \end{cases}$$ 

We propose the following procedure: For $n,N\in\N$ obtain $2nN$ real numbers in $[0,1]$ from a random numbers generator whose performance is to be evaluated. Construct a sequence $x_1,\ldots,x_{2N}\in\Delta^{n+1}$ and pair its elements to obtain a sequence $(x_1, x_2),\ldots, (x_{2N-1}, x_{2N})$. On each of the pairs $(x_{2i-1},x_{2i}), 1\leq i\leq N$ apply the function that returns $1$ if $x_{2i-1}\preceq x_{2i}$ and $0$ otherwise. Finally, perform a binomial test (e.g. \cite[24.5]{zar1999biostatistical}) on the binary sequence obtained in the previous step.

We have experimented with this procedure as follows: For each stochastic order $\preceq\in\{\leq\st, \leq\hr, \leq\lr\}$ we applied the procedure once on the iid uniform model and once on the autoregressive model with $\alpha=0.1$ (this choice of $\alpha$ was made by \cite{onn2011generating}).  We took $n=2, N=10000$ and repeated each experiment $100$ times. The code was written in Python and we used numpy.random's rand method which samples uniformly from $[0,1)$. For the $p$-value we used scipy.stats's binom\_test method. The average $p$-value together with the standard deviation are collected in Table \ref{t; 1g}. 

While the behaviour under the iid uniform model is consistent and understandable, under the autoregressive model the distinguishing abilities of the different stochastic orders differ significantly from each other. From the three stochastic orders that we examined (which are the only ones for which the corresponding Bernoulli parameter is known) only the likelihood ratio order appears to be adequate in this particular setting. We leave further experimentation and research in this respect for future work.

\begin{table}[!htbp]
     \centering
     \caption{Average $p$-value\label{t; 1g}}
     \begin{tabular}{|c||ccc|}
     \hline
     & \multicolumn{3}{c|}{\textbf{Stochastic order}} \\
        \hline \hline 
        \textbf{Model} & \textbf{st}  & \textbf{hr} & \textbf{lr} \\ \hline
        \textbf{Autoregressive} & $0.080 \pm 0.165$  & $0.524\pm 0.297$ & $0.001\pm 0.003$ \\ \hline
 $\mathbf{\mathcal{U}([0,1])}$ &$0.488\pm 0.301$  & $0.499\pm 0.270$ & $0.494\pm 0.297$ \\ \hline
 \textbf{Bernoulli parameter} &$1/3$& $1/4$    & $1/6$ \\ 
 \hline
     \end{tabular}
     \vspace{1ex}
\end{table} 

\section{Discussion}\label{sec; dis} 

In this work we addressed two aspects of random vectors $\Theta\sim\mathcal{U}(\Delta^{n})$: First, we calculated the restrictiveness of the hazard rate order and proposed a possible application of the restrictiveness results in randomness testing. Further research in this respect could be in determining the restrictiveness of additional stochastic orders, e.g. the mean residual life order (cf. \cite[2.A]{shaked2007stochastic}) for which we are still not able  to derive the analogue of Lemma \ref{lem 3}. Regarding the  application of the restrictiveness results in randomness testing, in this work we have only pointed out the possibility. Further research should determine the advantages and limitations of this method.  Second, we derived a formula for the moments of $\max(\Theta)$. The formula involves the function $f(n,t)$ defined in Lemma \ref{lem; 53}. As already mentioned in Remark \ref{rem; 33}, it would be interesting to see whether it is possible to express $f(n,t)$ in terms of the harmonic numbers and the generalized harmonic numbers.

\bibliography{bibliography}
\bibliographystyle{plain}
\end{document}